\newcommand{\br}[3]{{$#1$}$\lower4pt\hbox{$\tp\atop\raise4pt \hbox{$\scriptscriptstyle{#2}$}$} ${$#3$}}
\newcommand{\tw}[3]{{$#1$}${\,\scriptscriptstyle {#2}}\atop\raise9pt\hbox{$\scriptstyle\tp$} ${$#3$}}
\newcommand{\ttps}[2]{{#1}\raise5pt\hbox{$\lower12pt\hbox{$\scriptstyle\tp$}\atop \lower0pt\hbox{$\tilde\;$}$}\raise4.5pt\hbox{${\scriptstyle{#2}}$}}
\newcommand{\st}[1]{\mbox{${\,\scriptscriptstyle {#1}}\atop\raise5.5pt\hbox{$*$}$}}
\newcommand{\rd}[1]{\mbox{${\,\scriptscriptstyle {#1}}\atop\raise5.5pt\hbox{$\bullet$}$}}
\newcommand{\rt}[1]{\otimes_\chi}
\newcommand{\lt}[1]{\mbox{${\,\scriptscriptstyle {#1}}\atop\raise5.5pt\hbox{$\ltimes$}$}}
\newcommand{\btr}{\raise1.2pt\hbox{$\scriptstyle\blacktriangleright$}\hspace{2pt}}
\newcommand{\btl}{\raise1.2pt\hbox{$\scriptstyle\blacktriangleleft$}\hspace{2pt}}
\newcommand{\lcr}{\raise1.0pt \hbox{${\scriptstyle\rightharpoonup}$}}
\newcommand{\rcr}{\raise1.0pt \hbox{${\scriptstyle\leftharpoonup}$}}
\newcommand{\ttp}{{\lower12pt\hbox{$\tp$}\atop \hbox{$\tilde\;$}}}
\newcommand{\id}{\mathrm{id}}
\newcommand{\Bc}{\mathcal{B}}
\newcommand{\Ha}{{H}}
\newcommand{\A}{{A}}
\newcommand{\B}{{B}}
\newcommand{\Mc}{\mathcal{M}}
\newcommand{\C}{\mathbb{C}}
\newcommand{\Z}{\mathbb{Z}}
\newcommand{\R}{\mathbb{R}}
\newcommand{\tp}{\otimes}
\newcommand{\zt}{\zeta}
\newcommand{\U}{U}
\newcommand{\F}{F}
\newcommand{\ve}{\varepsilon}
\newcommand{\gm}{\gamma}
\newcommand{\la}{\lambda}
\newcommand{\End}{\mathrm{End}}
\newcommand{\Span}{\mathrm{Span}}
\newcommand{\Hom}{\mathrm{Hom}}
\newcommand{\Ind}{\mathrm{Ind}}
\newcommand{\g}{\mathfrak{g}}
\renewcommand{\b}{\mathfrak{b}}
\newcommand{\h}{\mathfrak{h}}
\newcommand{\mb}{\boldsymbol{m}}
\newcommand{\xb}{\boldsymbol{x}}
\newcommand{\yb}{\boldsymbol{y}}
\newcommand{\zb}{\boldsymbol{z}}
\newcommand{\wb}{\boldsymbol{w}}
\newcommand{\ztb}{\boldsymbol{\zeta}}
\newcommand{\omb}{\boldsymbol{\omega}}
\newcommand{\om}{\omega}
\newcommand{\s}{\mathfrak{s}}
\newcommand{\n}{\mathfrak{n}}
\newcommand{\nn}{\nonumber}
\newcommand{\p}{\mathfrak{p}}
\renewcommand{\l}{\mathfrak{l}}
\newcommand{\al}{\alpha}
\newcommand{\bt}{\beta}
\newcommand{\be}{\begin{eqnarray}}
\newcommand{\ee}{\end{eqnarray}}
\newtheorem{thm}{Theorem}[section]
\newtheorem{propn}[thm]{Proposition}
\newtheorem{lemma}[thm]{Lemma}
\newtheorem{corollary}[thm]{Corollary}
\theoremstyle{definition}
\newcommand{\parag}{\advance\prg by1 {\noindent\bf\thesection.\the\prg\hspace{6pt}}}
\begin{document}
\title{On dynamical adjoint functor}
\author{A.~Mudrov \\
\small Department of Mathematics, \\
\small University of Leicester,
\\
\small University Road
Leicester LE1 7RH\\
\small e-mail: am405@le.ac.uk\\
}
\date{ }

\maketitle
\begin{abstract}
We give an explicit formula relating the dynamical adjoint functor and dynamical twist over
nonalbelian base to the invariant pairing on parabolic Verma modules.
As an illustration, we give explicit $\U\bigl(\s\l(n)\bigr)$- and  $\U_\hbar\bigl(\s\l(n)\bigr)$-invariant star product on projective spaces.
\end{abstract}
\underline{\small Key words}: Dynamical twist, Verma modules, invariant star product.\\
\underline{\small AMS classification codes}: 16D90, 17B10, 17B37, 53D55.\\
\section{Introduction}
In this paper, we  clarify certain points arising in the theory of dynamical Yang-Baxter equation (DYBE), namely, we give an explicit expression of
the dynamical adjoint functor through the invariant pairing on parabolic Verma modules (PVM).

The DYBE appeared as a generalization of the ordinary Yang-Baxter equation in the mathematical physics literature
\cite{GN,AF,F} and
was actively studied in the 90-s,  see e.g. \cite{ES1,EV3}. It was later realized that it is related to quantization
of homogeneous Poisson-Lie manifolds, \cite{DM}. Namely, the star-product can be obtained by a reduction of  dynamical twist, which is a
left-invariant differential tri-operator on the group. In return, this finding has extended the framework of the DYBE, which had been originally formulated
over a commutative cocommutative base Hopf algebra, \cite{ES1}, to a general nonabelian base.

A recipe of constructing dynamical twist was suggested in \cite{DM}, through dynamical adjoint functor
 (DAF) between certain module categories
associated with a Levi subalgebra $H$ in a reductive (classical or quantum) universal enveloping algebra $\U$. One of them is a certain subcategory of finite dimensional $H$-modules, while the
other is the parabolic $O$-category, both equipped with tensor multiplication by finite dimensional $\U$-modules.

Remark that quantization of function algebras on homogeneous space involve only scalar PVM. General PVM appear
in quantization of associated vector bundles as projective modules over function algebras, as discussed
in \cite{DM}. This is where the nonabelian base really plays a role, along with the corresponding
dynamical twist and DAF.

An alternative approach to quantization was employed in \cite{AL}, where
the star product on semisimple coadjoint orbits of simple complex Lie groups was constructed directly from the  Shapovalov form on scalar PVM.  It was clear that the methods of \cite{DM} and \cite{AL}  were  close and based on similar underlying ideas.
Relation of the Shapovalov form on Verma modules with the dynamical twist was already indicated in earlier works on DYBE in the special case of Cartan base, \cite{EV3}.
 This construction had motivated the generalization for the nonabelian base, which was given in \cite{DM}, however, without straight use of the Shapovalov form.
A sort of "nonabelian paring" associated with the triangular factorization of (quantized) universal enveloping algebras, which is equivalent to Shapovalov form in representations,  was employed in \cite{EEM} for construction of  the  dynamical twist. It was done directly,  bypassing DAF. Thus, the explicit relation of the Shapovalov form to DAF  over general Levi subalgebra,  which is a more fundamental object than dynamical twist, has not been given much attention in the literature. In the present work we do it in a most elementary way.

We would like to  mention the following two papers in connection with the present work. In \cite{EE}, the dynamical twist is constructed with the use of the ABRR equation, \cite{ABRR}. The DAF
is also present there, but with no explicit connection with the Shapovalov form. Another paper of interest,
 \cite{KST}, directly generalizes the ideas of \cite{AL}. Remarkably,  the approach of  \cite{KST} can be
 suitable for certain conjugacy classes with non-Levi isotropy subgroups, which drop from the framework of
the  DYBE in its present version, but still can be quantized in a similar way, \cite{M}.

As an illustration, we give the star product on the
 homogeneous space $GL(n+1)/GL(n)\times GL(1)$ that is equviariant under the action of either classical or quantum group $GL(n+1)$.  In this simple case the Shapovalov form can be calculated explicitly. We show that its  $\U(\g)$-invariant limit coincides with
the star product on the projective space obtained in \cite{BBEW} by a different approach.

Unfortunately, the journal version contains a few typos and mistakes, which are corrected here. The erratum is added after the list of references.

\section{Parabolic Verma modules}
Let $\g$ be a simple complex Lie algebra.
Fix a Cartan subalgebra and denote by  $\b^\pm\subset \g$  the  Borel subalgebras relative to $\h$. Consider a Levi subalgebra $\l\subset \g$ containg
$\h$ and denote by $\p^\pm= \l+\b^\pm$ the corresponding parabolic subalgebras.
The nil-radicals $\n^\pm\subset \p^\pm$ complement $\l$ in the triangular
decomposition $\n^-\oplus \l\oplus \n^+$.

Denote by $\U_\hbar(\g)$, $\U_\hbar(\l)$, and  $\U_\hbar(\p^\pm)$ the quantum universal
enveloping algebras of the total Lie algebra $\g$, the Levi subalgebra $\l$, and
the parabolic subalgebras $\p^\pm$.  They are Hopf $\C[[\hbar]]$-algebras, with the inclusions
$\U_\hbar(\l) \subset \U_\hbar(\p^\pm)\subset \U_\hbar(\g)$ being Hopf algebra
homomorphisms. There are subalgebras $\U_\hbar(\n^\pm)\subset \U_\hbar(\p^\pm)$,
which are deformations of the classical universal enveloping algebras $\U(\n^\pm)$ and
which facilitate the triangular factorization
\be
\label{tr_fac}
\U_\hbar(\g)=\U_\hbar(\n^-) \U_\hbar(\l)\U_\hbar(\n^+).
\ee
This factorization makes $\U_\hbar(\g)$ a free $\U_\hbar(\n^-)-\U_\hbar(\n^+)$-bimodule generated by $\U_\hbar(\l)$. Accordingly, the parabolic subalgebras admit free factorizations
\be
\label{tr_fac1}
\U_\hbar(\p^-)=\U_\hbar(\n^-)\U_\hbar(\l),
\quad
\U_\hbar(\p^+)=\U_\hbar(\l)\U_\hbar(\n^+),
\ee
giving rise to  (\ref{tr_fac}).
Factorization (\ref{tr_fac1}) has the structure of smash product, as $\U_\hbar(\n^\pm)$ are invariant under the adjoint action of $\U_\hbar(\l)$ on $\U_\hbar(\g)$.
 Note that $\U_\hbar(\n^\pm)$ are neither Hopf algebras nor coideals
over $\U_\hbar(\l)$.

Let $A$ be a finite dimensional $\U_\hbar(\l)$-module (finite and free over $\C[[\hbar]]$). It becomes a $\U_\hbar(\p^\pm)$-module with the
 trivial action of $\U_\hbar(\n^\pm)$.
The parabolic Verma modules $M_A^\pm$ over $\U_\hbar(\g)$ are defined as induced from $A$:
$$
M_A^\pm=\U_\hbar(\g)\tp_{\U_\hbar(\p^\pm)} A \simeq\U_\hbar(\n^\mp) A.
$$
The last isomorphism indicates that $M_A^\pm$ are free $\U_\hbar(\n^\mp)$-modules generated by $A$.
Consider $M_A^\pm$ as a module over $\U_\hbar(\p^\mp)$ by restriction.
In what follows, we use sections of the action map $\U_\hbar(\p^\mp)\tp A\to M_A^{\pm}$.
We call the image of $M_A^{\pm}$ in $\U_\hbar(\p^\mp)\tp A$ under such a section  {\em a lift} of $M_A^{\pm}$.
The presentation $M_A^\pm\simeq \U_\hbar(\n^\mp) A$ is an example of
lift.

For any finite dimensional  $\U_\hbar(\l)$-module $A$ let $A^*$ denote the (left) dual to $A$.
The dual action
of $\U_\hbar(\l)$ on $A^*$ is given by  $(h\al)(a)=(\al)\bigl(\gm(h)a\bigr)$,
for $\al \in A^*$,  $a\in A$  and $h\in \U_\hbar(\l)$.
The triangular factorization of $\U_\hbar (\g)$ relative to $\U_\hbar (\l)$ defines
a projection $\U_\hbar (\g)\to \U_\hbar (\l)$, $u\mapsto [u]_\l$, which is $\U_\hbar (\l)$-invariant with respect
to the left and right regular action.
 The pairing
$$
\U_\hbar (\g)\tp A^*\tp \U_\hbar (\g)\tp A\to \C[[\hbar]], \quad \langle u\tp \al, v\tp a\rangle =\langle \al, [\gm(u) v]_\l a\rangle
$$
is $\U_\hbar (\g)$-invariant by construction and factors through the pairing between  $M^-_{A^*}$ and $M^+_{A}$.
It  is  non-degenerate if
and only if the modules $M^-_{A^*}$ and $M^+_{A}$ are irreducible. This pairing is $\U_\hbar(\g)$-invariant and  equivalent to the
contravariant Shapovalov form on $M^+_{A}$, \cite{Jan2}.

In the sequel, we need the following fact about induced modules of Hopf algebras.
\begin{lemma}
Suppose $\U$ is a Hopf algebra and $H$ is a Hopf subalgebra in $\U$.
Let $A$ be an $H$-module and $V$ be a $\U$-module regarded as an $H$ module by
restriction.
Then there are natural isomorphisms
$$
\Ind_H^\U A\tp  V\simeq \Ind_H^\U (A\tp  V),
\quad
V\tp \Ind_H^\U A\simeq \Ind_H^\U (V\tp  A),
$$
where $\Ind_H^\U $ designates  induction from an $H$-module to a $\U$-module.
\label{ind-tens}
\end{lemma}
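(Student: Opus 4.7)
The plan is to write down an explicit isomorphism built from the coproduct and antipode of $\U$. Adopting Sweedler notation $\Delta(u)=u^{(1)}\ot u^{(2)}$, consider the map
$$
\phi:(\Ind_H^\U A)\ot V\ \to\ \Ind_H^\U(A\ot V),\qquad u\ot_H a\ot v\ \mapsto\ u^{(1)}\ot_H\bigl(a\ot\gm(u^{(2)})v\bigr),
$$
together with its candidate inverse
$$
\psi:\Ind_H^\U(A\ot V)\ \to\ (\Ind_H^\U A)\ot V,\qquad u\ot_H(a\ot v)\ \mapsto\ u^{(1)}\ot_H a\ot u^{(2)}v.
$$

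First I would verify that $\phi$ descends to the balanced tensor product. For $h\in H$, the identity $\phi(uh\ot_H a\ot v)=\phi(u\ot_H ha\ot v)$ is obtained by expanding $\Delta(uh)=u^{(1)}h^{(1)}\ot u^{(2)}h^{(2)}$, using the antihomomorphism property of $\gm$, pulling $h^{(1)}\in H$ across $\ot_H$ to act diagonally on $A\ot V$, and collapsing the $V$-slot via $h^{(1)(2)}\gm(h^{(2)})=\ve(h)1$. The analogous check for $\psi$ is shorter and uses only $\Delta(H)\subset H\ot H$. Next, that $\phi$ and $\psi$ are mutually inverse is a direct Sweedler computation using coassociativity and the antipode axiom $u^{(1)}\gm(u^{(2)})=\gm(u^{(1)})u^{(2)}=\ve(u)1$. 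Finally, $\U$-equivariance — with the standard action $u'\cdot(u\ot_H m)=u'u\ot_H m$ on an induced module and the diagonal action on an outer tensor product — reduces to one more manipulation of the same flavour.

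The second isomorphism $V\ot\Ind_H^\U A\simeq\Ind_H^\U(V\ot A)$ is handled by the mirror construction $v\ot u\ot_H a\mapsto u^{(1)}\ot_H\bigl(\gm(u^{(2)})v\ot a\bigr)$ with inverse $u\ot_H(v\ot a)\mapsto u^{(2)}v\ot u^{(1)}\ot_H a$. Naturality in $A$ and $V$ is automatic from the explicit formulas. The only genuine obstacle is the well-definedness of $\phi$: the source is balanced over $H$ only in the first two slots, while the target requires the balance to traverse a diagonal $H$-action on all three. This is precisely what the twist by $\gm(u^{(2)})$ is designed to compensate, and the verification makes clear that the hypothesis of $H\subset\U$ being a Hopf subalgebra (rather than merely a subalgebra or a coideal) is indispensable at exactly the point where $h^{(1)}$ has to be moved across the balanced tensor.
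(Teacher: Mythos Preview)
Your treatment of the first isomorphism is correct and is exactly what the paper means by ``Hopf algebra yoga'': the maps $\phi$ and $\psi$ you write down are the standard ones, and the checks you outline go through. The paper records only that the isomorphism is the identity on $A\otimes V$, which your $\psi$ visibly satisfies at $u=1$.

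There is, however, a genuine error in your ``mirror construction'' for the second isomorphism. Neither of your maps is well-defined over a non-cocommutative base. For the proposed inverse $u\otimes_H(v\otimes a)\mapsto u^{(2)}v\otimes u^{(1)}\otimes_H a$, compare the images of $uh\otimes_H(v\otimes a)$ and $u\otimes_H(h^{(1)}v\otimes h^{(2)}a)$: they agree only if $h^{(1)}\otimes h^{(2)}=h^{(2)}\otimes h^{(1)}$. Likewise your forward map $v\otimes u\otimes_H a\mapsto u^{(1)}\otimes_H(\gm(u^{(2)})v\otimes a)$ fails the balance check: one is led to the identity $h^{(1)}\gm(h^{(3)})\otimes h^{(2)}=1\otimes h$, which is false already for $h=e_\alpha$ in $\U_\hbar(\g)$ with $\Delta(e_\alpha)=e_\alpha\otimes 1+q^{h_\alpha}\otimes e_\alpha$.

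The genuine mirror swaps the Sweedler legs and replaces $\gm$ by $\gm^{-1}$:
\[
v\otimes u\otimes_H a\ \longmapsto\ u^{(2)}\otimes_H\bigl(\gm^{-1}(u^{(1)})v\otimes a\bigr),
\qquad
u\otimes_H(v\otimes a)\ \longmapsto\ u^{(1)}v\otimes u^{(2)}\otimes_H a.
\]
The second of these is well-defined and $\U$-equivariant by the same one-line check as your $\psi$; the first uses the antipode axiom for the co-opposite coproduct, $\gm^{-1}(u^{(2)})u^{(1)}=\ve(u)$. This needs $\gm$ bijective, which holds for $\U_\hbar(\g)$ and is implicit in the paper's setting. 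With this correction your argument is complete and in line with the paper's.
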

The proof is a straightforward use of Hopf algebra yoga involving coproducts, antipode and counit. The isomorphisms are identical on $A\tp V$ and $V\tp A$, respectively.

\begin{propn}
\label{factorization}
The tensor product $M^+_A\tp M^-_B$ is isomorphic to the induced module $\U_\hbar(\g)\tp _{\U_\hbar(\l)} (A\tp B)$.
\end{propn}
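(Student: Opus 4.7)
The plan is to apply Lemma \ref{ind-tens} twice, together with transitivity of induction, after first reinterpreting $M^+_A$ as an induced module over the opposite parabolic.

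First, write $M^-_B = \Ind_{\U_\hbar(\p^-)}^{\U_\hbar(\g)} B$ and apply Lemma \ref{ind-tens} with $V = M^+_A$ viewed as a $\U_\hbar(\g)$-module restricted to $\U_\hbar(\p^-)$. This yields
$$
M^+_A \tp M^-_B \simeq \U_\hbar(\g) \tp_{\U_\hbar(\p^-)} (M^+_A \tp B).
$$

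Next comes the central observation: the restriction of $M^+_A$ to $\U_\hbar(\p^-)$ is itself an induced module, namely $M^+_A \simeq \U_\hbar(\p^-) \tp_{\U_\hbar(\l)} A$ as $\U_\hbar(\p^-)$-modules. By the triangular factorizations (\ref{tr_fac}) and (\ref{tr_fac1}), both sides coincide with $\U_\hbar(\n^-) \tp A$ as vector spaces. The action of $\U_\hbar(\n^-)$ by left multiplication is the same on both sides; for $l \in \U_\hbar(\l)$ and $x \in \U_\hbar(\n^-)$, pushing $l$ past $x$ inside $\U_\hbar(\g)$ via the Hopf identity $lx = \sum (\ad_{l_{(1)}} x) l_{(2)}$, combined with the triviality of $\U_\hbar(\n^+)$ on $A$, matches the smash product action on $\U_\hbar(\p^-) \tp_{\U_\hbar(\l)} A$. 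The remark after (\ref{tr_fac1}) about $\U_\hbar(\l)$-invariance of $\U_\hbar(\n^\pm)$ is what makes this work.

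With this identification in hand, apply Lemma \ref{ind-tens} a second time, now at the $\U_\hbar(\p^-)$ level, to move $B$ inside:
$$
M^+_A \tp B \;\simeq\; \bigl(\U_\hbar(\p^-) \tp_{\U_\hbar(\l)} A\bigr) \tp B \;\simeq\; \U_\hbar(\p^-) \tp_{\U_\hbar(\l)} (A \tp B).
$$
Combining this with the first step and invoking transitivity of induction gives
$$
\U_\hbar(\g) \tp_{\U_\hbar(\p^-)} \bigl(\U_\hbar(\p^-) \tp_{\U_\hbar(\l)} (A \tp B)\bigr) \;\simeq\; \U_\hbar(\g) \tp_{\U_\hbar(\l)} (A \tp B),
$$
which is the required isomorphism.

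The main obstacle is the middle step: showing that $M^+_A$, when restricted to $\U_\hbar(\p^-)$, is induced from $A$ as an $\l$-module. This is essentially bookkeeping with coproducts and triangular factorization, but one must be careful to track that the $\U_\hbar(\l)$-action obtained from the adjoint action on $\U_\hbar(\n^-)$ assembles correctly with the original action on $A$ into the smash product action, and that the chain of isomorphisms is natural so that the final map is $\U_\hbar(\g)$-linear rather than merely $\U_\hbar(\p^-)$-linear.
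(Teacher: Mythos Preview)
Your argument is correct and follows essentially the same route as the paper: two applications of Lemma~\ref{ind-tens}, the identification of a parabolic Verma module restricted to the opposite parabolic as an induction from $\U_\hbar(\l)$, and transitivity of induction. The only difference is that the paper factors through $\U_\hbar(\p^+)$ (pulling $M_B^-$ inside first and recognizing $M_B^-|_{\p^+}\simeq \Ind_{\U_\hbar(\l)}^{\U_\hbar(\p^+)}B$), whereas you factor through $\U_\hbar(\p^-)$; this is a symmetric and equally valid choice.
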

\begin{proof}
Based on Lemma \ref{ind-tens},
\be
M_A^+\otimes M_B^-&\simeq&\Ind_{\U_\hbar(\p^+)}^{\U_\hbar(\g)}(A\otimes M_B^-)\simeq\Ind_{\U_\hbar(\p^+)}^{\U_\hbar(\g)}\bigl(A\otimes \Ind_{\U_\hbar(\l)}^{\U_\hbar(\p^+)}(B)\bigr)
\nn\\
&\simeq&\Ind_{\U_\hbar(\p^+)}^{\U_\hbar(\g)}\Ind_{\U_\hbar(\l)}^{\U_\hbar(\p^+)}(A\otimes B)\simeq
\Ind_{\U_\hbar(\l)}^{\U_\hbar(\g)}(A\otimes B),
\ee
as required.
\end{proof}
\section{Dynamical adjoint functor}
Let us recall the definition of dynamical twist over general base \cite{DM}.
For simplicity, we take for the base a Hopf algebra $H$ assuming it to be a Hopf subalgebra in the total
Hopf algebra $\U$. A dynamical twist is an invertible element $F\in H\tp \U\tp \U$ subject to the cocycle identity
$$
(\id_\Ha \tp \id_\U \tp \Delta)(\F)
(\delta\tp \id_\U \tp \id_\U)(\F)
=
(\id_\Ha \tp \Delta \tp \id_\U)(\F)(\F\tp 1_\U)
$$
and the normalization condition $(\id\tp \ve\tp \id)(F)=1\tp 1\tp 1 =(\id\tp \id\tp \ve)(F)$.
In practice, the base Hopf algebra $H$ needs to be replaced by a certain "localization", which is already not a coalgebra
but only a right coideal, see \cite{EE,EEM}.

In representation-theoretical terms, the dynamical twist is a family of operators
$$
\F_{A,V,W}\colon (A\tp V)\tp W\to A\tp (V\tp W)
,
$$
where $V,W$ are $\U$-modules  and  $A$ is an $H$-module.
The above mentioned localization of $H$ means that not all $A$ are admissible. The cocycle identity turns into
$$
\F_{A,Z,V\tp W}\F_{A\tp Z,V,W}
=
\F_{A,Z\tp V,W}(\F_{A,Z, V}\tp \id_W).
$$
Here $Z$ is another $\U$-module, and the second factor in the left-hand side regards $A\tp Z$ as an $H$-module through
the coaction.

We recall a general construction of DAF. This functor was introduced in \cite{DM}, where it was used for construction of the dynamical twist.
The name "dynamical adjoint" should be taken as a single term, as the functor of concern is rather "dynamizaion" of
the adjoint functor to the restriction functor than
dualization of anything.

Suppose $\Mc$ is a monoidal category and $\Bc$, $\Bc'$ are two (right) module categories over $\Mc$. A functor $J$ from $\Bc$ to $\Bc'$ is called dynamical adjoint if
for all $A, B\in \Bc$ and $V\in \Mc$ there is an isomorphism
$$
\Theta\colon\Hom_{\Bc}(B, A\tp V)\simeq \Hom_{\Bc'}(J(B), J(A)\tp V).
$$
Here $\tp$ stands for the actions of $\Mc$ on $\B$ and $\B'$.
Given such functor, the dynamical twist is a morphism
$F\colon A\tp V\tp W\to A\tp V\tp W$ whose $J$-image is the composition
$$
J({A\tp V\tp W})\stackrel{\Theta(\id_{A\tp V\tp W})}{\longrightarrow} J({A\tp V})\tp W\stackrel{\Theta(\id_{A\tp V})\tp \id_W}{\longrightarrow} J({A})\tp V\tp W.
$$

In our special case, $\Mc$ is the category of finite dimensional representations
of $\U_\hbar(\g)$, $\Bc$ is a certain subcategory of finite dimensional representations of  $\U_\hbar(\l)$, and  $\Bc'$ is the category of integrable modules over $\U_\hbar(\g)$. The functor $J$ is the parabolic induction, so $J(A)=M^+_A$ on objects. The category $\Bc$ is determined by the requirements that $M_A^+$ is irreducible once $A$ is irreducible and $\Bc$  is invariant under  tensoring with objects from $\Mc$ regarded as  $\U_\hbar(\l)$-modules.

That the functor is DAF follows from Proposition
\ref{factorization}.
Indeed, if the module $M_A^+$ is irreducible, then $M_{A^*}^-$ is its (restricted) dual, and
$$
\Hom_{\g}(M^+_B, M^+_A\tp  V)\simeq
\Hom_{\g}(M^-_{A^*}\tp M^+_B,  V)\simeq
\Hom_{\l}(A^*\tp B,V)\simeq \Hom_{\l}(B,A\tp V),
$$
as required. The isomorphism in the middle  is the Frobenius reciprocity facilitated by Proposition \ref{factorization}.

Assuming $M^+_A$ irreducible, denote by $S_{A,A^*}\in M^+_A\tp M_{A^*}^-$ the $\U_\hbar(\g)$-invariant
canonical element of the pairing between $M_{A^*}^-$ and $M^+_A$.
For the dual  bases $\{x_i\} \subset M^+_A$, $\{x^i\} \subset M_{A^*}^-$,
it equals $S_{A,A^*}=\sum_i x_i \tp x^i$.
We will use the following Sweedler-like notation for the presentation
$$
S_{A,A^*}= S_1\tp S_2= S_-S_A\tp S_+S_{A^*}.
$$
where  $S_-\tp S_A\tp S_+\tp S_{A^*}\in \U_\hbar(\p^-)\tp A\tp \U_\hbar(\p^+)\tp A^*$
 symbolizes  a lift of $S_{A,A^*}$.

We have the following obvious property of the family $\{S_{A,A^*}\}$.
Suppose $B$ is an $\U_\hbar(\l)$-submodule in $A$. It is separable
as a direct summand, so the dual module $B^*$ is separable as a direct summand
in $A^*$. Let $\pi\colon A\to B$ and $\psi\colon A^*\to B^*$ be the intertwining projections.
Let $\hat\pi \colon M_{A}^+\to M_{B}^+$ and $\hat\psi \colon M_{A^*}^-\to M_{B^*}^-$ be the induced   $\U_\hbar(\g)$-morphisms.
Then the projection
$(\hat\pi\tp \hat\psi)(S_{A,A^*})=S_-\pi(S_B)\tp S_+\psi(S_{B^*})$
 is equal to the canonical element $S_{B,B^*}$.
Thus, a lift of $S_{B,B^*}$ can be obtained from a lift of $S_{A,A^*}$ in a natural way.

Every $\U_\hbar(\g)$-module $V$ is at the same time
a $\U_\hbar(\l)$-module by restriction. It becomes an $\U_\hbar(\p^\pm)$-module when extended trivially to  $\U_\hbar(\n^\pm)$. This representation
of $\U_\hbar(\p^\pm)$ is different as compared to
restricted from $\U_\hbar(\g)$. To distinguish  it
from the representation $\rho$ restricted from $\U_\hbar(\g)$, we
use the notation $\rho_{\pm}$.

Fix a $\U_\hbar(\g)$-module $V$ and assign to every element of $M_{A^*}^-$ a linear operator $A\tp V\to V$ by
\be
\label{M-map}
u \al \colon a\tp v \mapsto \bigl(\al\tp (\rho( u^{(1)})\bigr)\Bigl(\rho_{+}\bigl(\gm (u^{(2)})\bigr) ( a\tp v)\Bigr),
\ee
where $u\tp \al \in \U_\hbar(\p^+)\tp A^*$ is a lift of $u \al$, and $v\in V$. The map is correctly defined. Indeed, regarding  $M_{A^*}^-$ as a
$\U_\hbar(\p^+)$-module, it is isomorphic
to $\mathrm{Ind}^{\p^+}_{\l}A^*$.
The assignment (\ref{M-map}) coincides on $(ux) \al$ and $u (x\al)$ for every $x$ from $\U_\hbar(\l)$,
because $\rho_{+}\bigl(\gm (x^{(2)})\bigr)=\rho\bigl(\gm (x^{(2)})\bigr)$.
One can check that the constructed map $M_{A^*}^-\to \End(A\tp V, V)$ is $\U_\hbar(\l)$-invariant.
Note that for the classical universal enveloping algebras formula (\ref{M-map}) simplifies to
$$
u \al \colon a\tp v \mapsto \al(a) \rho( u)(v),
$$
because the lift $u$ can be taken  in the Hopf subalgebra $\U(\n^+)\subset \U(\g)$, which  is annihilated by $\rho_{+}$.

DAF gives rise to the collection of intertwining operators
$$M_{A\tp V}^+\stackrel{\Theta(\id_{A\tp V})}{\longrightarrow} M_{A}^+\tp V.$$
\begin{propn}
\label{J-S}
The restriction to $A\tp V\subset M^+_{A\tp V}$ of the intertwining operator
$
\Theta(\id_{A\tp V})
$
acts by the assignment
$$
\Theta(\id_{A\tp V})\colon a\tp v\mapsto
S_1\tp  S_2(a\tp v),
$$
where $S_{A,A^*}=S_1\tp S_2\in M^+_A\tp M^-_{A^*}$
\end{propn}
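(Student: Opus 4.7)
The plan is to compute $\Theta(\id_{A \otimes V})$ directly by unpacking the three isomorphisms that constitute the adjunction $\Theta$, applied in reverse order.

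Under the innermost isomorphism $\Hom_\l(A^* \otimes B, V) \simeq \Hom_\l(B, A \otimes V)$ with $B = A \otimes V$, the identity corresponds to the evaluation $f \colon \alpha \otimes a \otimes v \mapsto \alpha(a) v$. Frobenius reciprocity, facilitated by Proposition~\ref{factorization}, lifts $f$ to the unique $\U_\hbar(\g)$-equivariant $\tilde\Phi \colon M^-_{A^*} \otimes M^+_{A \otimes V} \to V$ whose restriction to the generating $\l$-submodule $A^* \otimes (A \otimes V)$ is $f$. The outer duality, built on the canonical pairing element $S_{A,A^*} = \sum_i x_i \otimes x^i$, unfolds $\tilde\Phi$ into
$$
\Phi(b) = \sum_i x_i \otimes \tilde\Phi(x^i \otimes b).
$$
Substituting $b = a \otimes v$, the proposition reduces to verifying
$$
\tilde\Phi(y \otimes (a \otimes v)) = y(a \otimes v), \qquad y \in M^-_{A^*},
$$
where the right-hand side is defined by (\ref{M-map}).

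I would establish this identity by induction on the filtration degree in the presentation $M^-_{A^*} = \U_\hbar(\n^+) A^*$. The base case $y = \alpha \in A^*$ is immediate: both sides equal $\alpha(a) v$. For the inductive step I would show that both sides transform identically under left multiplication of $y$ by $n \in \U_\hbar(\n^+)$, namely by $\rho(n)$ on $V$. On the $\tilde\Phi$ side this combines $\U_\hbar(\g)$-equivariance with the key structural fact that $\U_\hbar(\n^+)$ acts trivially on $A \otimes V \subset M^+_{A \otimes V}$ by construction of the parabolic Verma module; consequently $n \cdot (y \otimes (a \otimes v))$ collapses to $(ny) \otimes (a \otimes v)$, and equivariance of $\tilde\Phi$ yields the recursion. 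On the (\ref{M-map}) side the same recursion follows from manipulating the coproduct $\Delta(nu)$ in $\U_\hbar(\p^+) \otimes \U_\hbar(\p^+)$ and discarding the summands annihilated by the vanishing $\rho_+(\U_\hbar(\n^+)) = 0$ on $A \otimes V$.

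The main obstacle is this last step: one must check by direct computation that every summand of $\Delta(nu)$ carrying a nontrivial $\U_\hbar(\n^+)$-factor in the second slot is killed when $\rho_+(\gm(\cdot))$ is applied to $a \otimes v$. Classically this follows from primitivity of $n$; in the quantum case one must track the group-like $K$-factors appearing in the coproducts of root generators, but no ingredient beyond the Hopf algebra structure of $\U_\hbar(\p^+)$ is required. Once both recursions are matched, the induction closes and $\Phi(a \otimes v) = S_1 \otimes S_2(a \otimes v)$ as claimed.
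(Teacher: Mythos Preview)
Your decomposition is exactly the paper's: coevaluation by $S_{A,A^*}$, followed by the Frobenius adjoint $\tilde\Phi$ of the evaluation map. The reduction to the identity $\tilde\Phi(y\otimes(a\otimes v)) = y(a\otimes v)$ for $y\in M^-_{A^*}$ is also correct and is precisely what the paper means by ``gives the result immediately.''

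Where you diverge is in proving this last identity by induction on filtration degree. This works, but it is unnecessary, and the ``main obstacle'' you flag (tracking the $q^{h_\alpha}$-factors in quantum coproducts of $\U_\hbar(\n^+)$) is an artifact of the inductive route. The direct argument, implicit in the paper, goes in one step: for any lift $u\otimes\alpha\in\U_\hbar(\p^+)\otimes A^*$ of $y$, the Hopf identity
\[
(u\alpha)\otimes(a\otimes v)=u^{(1)}\cdot\bigl[\alpha\otimes\gamma(u^{(2)})(a\otimes v)\bigr]
\]
holds in $M^-_{A^*}\otimes M^+_{A\otimes V}$. Since $\U_\hbar(\p^+)$ is a genuine Hopf subalgebra, $\gamma(u^{(2)})\in\U_\hbar(\p^+)$ preserves the generating submodule $A\otimes V$ and acts there by $\rho_+$. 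Equivariance of $\tilde\Phi$ together with the base case $\tilde\Phi(\alpha\otimes(a\otimes v))=\alpha(a)v$ then yields formula~(\ref{M-map}) directly, with no induction and no bookkeeping of Cartan factors. Your approach is sound; the paper's is simply shorter because it exploits that $\U_\hbar(\p^+)$, rather than $\U_\hbar(\n^+)$, is closed under the Hopf operations.
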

\begin{proof}
The operator $\Theta(\id_{A\tp V})$ factorizes into the composition
$$
M_{A\tp V}^+\to M_{A}^+\tp M_{A^*}^-\tp M_{A\tp V}^+\simeq M_{A}^+\tp \Ind^\g_\l(A^*\tp A\tp V)
\to M_{A}^+\tp V,
$$
where the left arrow is the coevaluation  $1\mapsto S_{A,A^*}$, while right arrow is the induced extension from
the evaluation map $A^*\tp A\tp V\to V$. Note that the isomorphism in the
middle is identical on $A^*\tp A\tp V$.
Applying this composition to an element $a\tp v\in A\tp V\subset M_{A\tp V}^+$ gives the result immediately.
\end{proof}
Remark that for the classical universal enveloping algebras we can write the simple formula
$$
\Theta(\id_{A\tp V})\colon a\tp v\mapsto
S_1\tp  S_{A^*}(a)S_+(v).
$$
Here the factor $S_+$ acts on $v\in V$ as an element of $\U(\g)$.

To relate the dynamical twist to the invariant pairing, consider the projection
$P\colon M_A^+\to A$ defined as the composition
$$M_A^+\to A\tp A^* \tp  M_A^+\to A,$$
where the left arrow is induced by coevaluation $\C[[\hbar]]\to A\tp A^*$, and the right map is
the invariant pairing between $A^*\subset M_{A^*}^-$ and $M_A^+$.
By construction, $P$ is $\U_\hbar(\l)$-invariant and identical on $A\subset M_A^+$.
Moreover, one can check that
it is $\U_\hbar(\p^-)$-invariant.

The operator $P$ implements the isomorphism
\be
\label{dynF}
\Hom_\g(M_B^+,M_A^+\tp V)\to \Hom_\l(B,A\tp V)
\ee
that is inverse to  $\Theta$. Namely, given a $\U_\hbar(\g)$-invariant operator $M_B^+ \to M_A^+\tp V$,
restrict it to $B$ and compose with $P\tp \id_V$, to get a $\U_\hbar(\l)$-operator $B\to \A\tp V$.
Therefore the dynamical twist $F$ factorizes to the chain
$$
A\tp V\tp W\hookrightarrow
M_{A\tp V\tp W}\to M_{A\tp V}\tp W \to M_{A}\tp V\tp W \to A    \tp V\tp W.
$$
\begin{propn}
The dynamical twist $F_{A,V,W}$ is expressed through the canonical element
$S_{A\tp V,V^*\tp A^*}=S_-S_{A\tp V}\tp S_+S_{V^*\tp A^*}\in M_{A\tp V}^+\tp M_{V^*\tp A^*}^-$ by
the formula
\be
\label{F-S}
F(a\tp v\tp w)=(\rho_{-}\tp \rho)\circ\Delta(S_-)(S_{A\tp V})\tp (S_+S_{V^*\tp A^*})(a\tp v\tp w).
\ee
\end{propn}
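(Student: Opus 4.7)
The plan is to trace an element $a\tp v\tp w\in A\tp V\tp W$ through the four-step factorization of $F$ displayed just before the statement, matching the output against (\ref{F-S}) arrow by arrow.

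For the second arrow $M^+_{A\tp V\tp W}\to M^+_{A\tp V}\tp W$ I would apply Proposition~\ref{J-S} with the roles of $A$ and $V$ played by $A\tp V$ and $W$, so that the relevant canonical element is $S_{A\tp V,V^*\tp A^*}=S_-S_{A\tp V}\tp S_+S_{V^*\tp A^*}$. The proposition yields
$$a\tp v\tp w\;\longmapsto\;S_-S_{A\tp V}\;\tp\;(S_+S_{V^*\tp A^*})(a\tp v\tp w)\in M^+_{A\tp V}\tp W,$$
the $W$-valued second factor being computed through (\ref{M-map}). This already reproduces the final tensor leg of (\ref{F-S}).

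The third arrow is $\Theta(\id_{A\tp V})\tp \id_W$, which acts nontrivially only on the first factor $S_-\cdot S_{A\tp V}\in M^+_{A\tp V}$. Since $\Theta(\id_{A\tp V})$ is $\U_\hbar(\g)$-equivariant and $S_-\in\U_\hbar(\p^-)\subset \U_\hbar(\g)$, the action of $S_-$ can be pulled outside $\Theta$, acting on $M^+_A\tp V$ via the coproduct:
$$\Theta(\id_{A\tp V})(S_-\cdot S_{A\tp V})=\Delta(S_-)\cdot\Theta(\id_{A\tp V})(S_{A\tp V}).$$
The final projection $P\tp\id_V\tp\id_W$ is then handled by two facts recorded in the paragraphs immediately preceding the statement. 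First, $\U_\hbar(\p^-)$ is a Hopf subalgebra of $\U_\hbar(\g)$, so $\Delta(S_-)\in\U_\hbar(\p^-)^{\tp 2}$; combined with the $\U_\hbar(\p^-)$-invariance of $P$, this turns the restricted $\U_\hbar(\g)$-action of $\Delta(S_-)$ on $M^+_A\tp V$ into $(\rho_-\tp\rho)\circ\Delta(S_-)$ on $A\tp V$ after $P\tp\id_V$ has been applied. Second, $(P\tp\id_V)\circ\Theta(\id_{A\tp V})$ is the inverse of $\Theta$ restricted to $A\tp V\subset M^+_{A\tp V}$, hence fixes $S_{A\tp V}\in A\tp V$. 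Concatenating these three observations produces exactly (\ref{F-S}).

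The subtle point to watch is the change of action on the $A$-leg under $P$: once pulled out of $\Theta$, $S_-$ acts on $S_{A\tp V}\in A\tp V$ via the full restricted $\U_\hbar(\g)$-action, but the projection $P$ replaces this by the $\rho_-$-action (trivial on $\U_\hbar(\n^-)$). The resulting asymmetry $(\rho_-\tp\rho)\circ\Delta(S_-)$ between the $A$- and $V$-legs is precisely what the $\U_\hbar(\p^-)$-invariance of $P$ encodes, and it is the only nontrivial structural input needed beyond the definitions and Proposition~\ref{J-S}.
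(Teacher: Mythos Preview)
Your proposal is correct and follows exactly the approach the paper intends: it traces the factorization chain using Proposition~\ref{J-S} for the arrow $M^+_{A\tp V\tp W}\to M^+_{A\tp V}\tp W$ and then invokes the inverse description (\ref{dynF}) together with the $\U_\hbar(\p^-)$-invariance of $P$ for the remaining arrows. The paper's own proof is the single line ``Immediate consequence of (\ref{dynF}) and Proposition \ref{J-S}'', and your argument is precisely the unpacking of that sentence.
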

\begin{proof}
Immediate consequence of (\ref{dynF}) and Proposition \ref{J-S}.
\end{proof}

Remark that for the classical universal enveloping algebras
the natural lift gives $S_\pm\in \U(\n^\pm)$ killed by $\rho_\pm$.
The formula (\ref{F-S}) takes the simple form
$$
F(a\tp v\tp w)=(\id_A\tp S_-)(S_{A\tp V})\tp  S_+(w)S_{V^*\tp A^*}(a\tp v),
$$
where $S_\pm$ act on $V$, $W$ as $\U(\g)$-modules. In the quantum case,
the subalgebra $\U_\hbar(\n^-)$ can be taken a left coideal, and the formula can also be simplified by replacing
$(\rho_{-}\tp \rho)\circ\Delta(S_-)$ with $(\id\tp \rho)(S_-)$.

Now suppose that $A=\mathbb{C}_\lambda$ is a scalar $\U_\hbar(\l)$-module corresponding to weight $\la$.
Denote by $S^\la=\la(S_-^{(1)})S_-^{(2)}\tp S_+^{(1)}\la^*(S_+^{(2)})$,
where $S_-\tp S_+$ is a lift of  $S_{\C^\la, \C^{\la^*}}\in M_{\la}^+\tp M_{\la^*}^-$ to $\U_\hbar(\p^-)\tp \U_\hbar(\p^+)$.
\begin{propn}
The operators  $F^{\la}$ and $S^\la$ coincide on
invariants $V^\l\tp W^\l\subset V\tp W$:
$$
F^\la(v\tp w)=S^\la(v\tp  w),\quad v\in V^\l, \quad w\in W^\l.
$$
\end{propn}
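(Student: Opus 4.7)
The plan is to substitute $A=\C^\la$ into formula (\ref{F-S}), evaluate both tensor factors on $a\tp v\tp w$ with $v\in V^\l$, $w\in W^\l$, and reduce everything to the scalar canonical element using $\l$-invariance. For the second factor $(S_+S_{V^*\tp A^*})\bigl((a\tp v)\tp w\bigr)$, I would unpack definition (\ref{M-map}) with $A$ replaced by $A\tp V$ and $V$ by $W$. Since $\rho_+$ agrees with $\rho$ on $\U_\hbar(\l)$ and with the counit on $\U_\hbar(\n^+)$, and since $v$ and $w$ are $\l$-invariant, the coproduct-split action of $\rho_+\bigl(\gm(S_+^{(2)})\bigr)$ on $(a\tp v)\tp w$ collapses onto the $A$-factor alone, producing the scalar $\la^*(S_+^{(2)})$. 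The whole second factor then simplifies to $\la^*(S_+^{(2)})\,\langle S_{V^*\tp A^*},\,a\tp v\rangle\,\rho(S_+^{(1)})w$. For the first factor, $\rho_-|_A=\la$ (extended as a character of $\p^-$) turns $(\rho_-\tp\rho)\Delta(S_-)(S_{A\tp V})$ into $a\tp\rho\bigl(\la(S_-^{(1)})S_-^{(2)}\bigr)\tilde v$, where $\tilde v\in V$ is the $V$-component of $S_{A\tp V}$.

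The central step is to reduce the lift of $S_{A\tp V,V^*\tp A^*}$ to the lift of $S_{\C^\la,\C^{\la^*}}$ on the $\l$-invariant block. Since $\l$ is reductive and $V$ is finite-dimensional, I choose an $\l$-module decomposition $V=V^\l\oplus V'$, and dually $V^*=(V^*)^\l\oplus (V')^*$. By $\g$-invariance, $S_{A\tp V,V^*\tp A^*}$ splits along the matched summands of this decomposition, and the summand corresponding to $V'$ is annihilated when we pair its $V^*$-component against $v\in V^\l$. Since $A\tp V^\l\simeq\bigoplus_k\C^\la$ for a basis $\{e_k\}$ of $V^\l$ with dual basis $\{e^k\}\subset(V^\l)^*\cong(V^*)^\l$, the submodule functoriality of canonical elements stated earlier in the paper identifies the surviving block as $\sum_k S_-(a\tp e_k)\tp S_+(e^k\tp a^*)$, with the same lift $S_-\tp S_+$ of the scalar canonical element $S_{\C^\la,\C^{\la^*}}$ for every $k$.

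Substituting this into the simplified formula, summing over $k$, and using $\sum_k e^k(v)\,e_k=v$ for $v\in V^\l$ collapses $F^\la(v\tp w)$ to
\[
\la(S_-^{(1)})\la^*(S_+^{(2)})\,\rho(S_-^{(2)})v\tp\rho(S_+^{(1)})w=S^\la(v\tp w),
\]
as desired. The hardest step is the decomposition in the second paragraph: one must carefully argue that $\l$-invariance of $v$ forces only the $\la$-isotypic block of $A\tp V$ to contribute, which relies jointly on $\g$-invariance of the canonical element and the submodule functoriality of $S$ under inclusions of $\l$-direct summands.
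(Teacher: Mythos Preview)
Your argument is correct and follows essentially the same route as the paper's one-line proof (``Apply the projections $\C_\la\tp V\to \C_\la\tp V^\l$, $\C_\la\tp W\to \C_\la\tp W^\l$ to (\ref{F-S})''): you are simply unpacking what those projections do, namely collapsing the $\rho_\pm$ actions to the character $\la$ on $\l$-invariants and invoking the submodule functoriality of canonical elements to reduce $S_{A\tp V,V^*\tp A^*}$ to copies of the scalar $S_{\C^\la,\C^{\la^*}}$. The paper leaves all of this implicit; your version spells it out.
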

\begin{proof}
Apply the projections $\C_\la\tp V\to \C_\la\tp V^\l$, $\C_\la\tp W\to \C_\la\tp W^\l$ to (\ref{F-S}).
\end{proof}
The formula for  $S^\la$ can be simplified to $S^\la=S_-\tp S_+$ by an appropriate choice of basis.
\section{Quantum algebra $\U_\hbar(\s\l(n))$}
Further we apply the above theory to construct the star-product on the homogeneous space
space $GL(n+1)/GL(n)\times GL(1)$, for which case the invariant pairing on PVM can be explicitly calculated. We will focus on the situation of quantum groups,
because the classical case can be readily obtained from that by a  certain limit procedure.

Recall the definition of the quantized universal enveloping algebra
$\U_\hbar(\s\l(n))$, see \cite{Dr1}.
Let $R$ and  $R^+$ denote respectively the root system and the set of positive roots of the Lie algebra $\s\l(n)$.
The set $\Pi_+=(\al_1,\al_1,\ldots, \al_{n-1})$ of simple positive roots is equipped with the natural ordering.

The quantum group $\U_q(\s\l(n))$ is generated by
$
e_{\al}, f_{\al}, h_\al$, $\al\in \Pi_+
$,
subject to the relations
$$
[h_{\al},e_{\bt}]= (\al,\bt) e_{\al},
\quad
[h_{\al},f_{\bt}]=- (\al,\bt) e_{\al},
\quad
[e_{\al},f_{\bt}]=\delta_{\al,\bt} \frac{q^{h_{\al}}-q^{-h_{\al}}}{q-q^{-1}},
$$
where $(.,.)$ is the inner product on  $\h^*=\Span(R)$. Here $q=e^{\hbar}$ with
$\hbar$ being the deformation parameter.

Also, the Chevalley generators $e_\al$, $f_\al$ satisfy the Serre relations
$$
e_{\al}^{2}
e_{\bt}
-
(q+q^{-1})
e_{\al}e_{\bt}e_{\al}
+
e_{\bt}e_{\al}^{2}
=0,\quad
f_{\al}^{2}
f_{\bt}
-
(q+q^{-1})
f_{\al}f_{\bt}f_{\al}
+
f_{\bt}f_{\al}^{2}
=0.
$$
if $(\al,\bt)=-1$ and
$
[e_\al,e_\bt]=0=[f_\al,f_\bt]
$
if $(\al,\bt)=0$.

The comultiplication $\Delta$ and antipode $\gm$ are defined on the generators by
$$
\Delta(h_\al)=h_\al\tp 1+1\tp h_\al, \quad \gm(h_\al)=-h_\al,
$$
$$
\Delta(e_\al)=e_\al\tp 1+q^{h_\al}\tp e_\al, \quad \gm(e_\al)=-q^{-h_\al}e_\al,
$$
$$
\Delta(f_\al)=f_\al\tp q^{-h_\al}+1\tp f_\al, \quad \gm(f_\al)=-f_\al q^{h_\al}.
$$
The counit homomorphism $\ve$ annihilates $e_\al$, $f_\al$, $h_\al$.

The elements $e_\al, h_\al$ generate the positive Borel subalgebra  $\U_\hbar (\b^+)$
 in $\U_q(\s\l(n))$.
Similarly, $f_\al, h_\al$ generate the negative Borel subalgebra  $\U_\hbar (\b^-)$.
They are deformations of the classical Borel subalgebras whose Poincar\'e-Birgoff-Witt
basis is generated by the Cartan generators constructed as follows.

Every positive root has the form $\mu=\al_k+\al_{k+1}+\ldots+\al_m$, for some $m>k$;
the integer $m-k+1$ is called height of $\mu$.
Put
$$
e_\mu=[e_{k},[e_{k+1},\ldots [e_{m-1},e_{m}]_q\ldots]_q]_q
,\quad
\tilde e_\mu=q^{2(m-k)}[e_{k},[e_{k+1},\ldots [e_{m-1},e_{m}]_{\bar q}\ldots]_{\bar q}]_{\bar q},
$$
where $e_k=e_{\al_k}$.
The roots can be written in an orthogonal basis $\{\ve_i\}_{i=1}^n$ as $\ve_i-\ve_j$, $i,j=1,\ldots,n$, $i\not=j$.
Lexicographically ordered pairs $(i,j)$ induce
an ordering  on positive roots $\ve_i-\ve_j$, $i<j$,
consistent with the ordered the basis $(\al_1,\al_2,\ldots, \al_{n-1})\subset \h^*$.
The ordered monomials in $e_\mu, \mu\in R^+$, form a BPW basis in $\U_\hbar (\b^+)$
over the Cartan subalgebra  in $\U_\hbar (\h)$ generated by $h_\al$.

As all $\al_i\in \Pi_+$ enter positive roots  at most once, we may regard elements of
$R^+$ as sets of simple roots.
This makes sense of writing $\mu \subset \nu$ and $\nu - \mu$ for some pairs of roots
$\mu, \nu\in R^+$.  Also, $\al\cap\bt$ determines a root unless it is empty.
\begin{lemma}
For all positive roots $\mu$, the vectors $e_\mu$ and $\tilde e_\mu$
are related by the antipode,
$
\gm(\tilde e_\mu)=-q^{-h_\mu} e_\mu.
$
\label{tilde-antipod}
\end{lemma}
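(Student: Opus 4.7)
The plan is to induct on the height $m-k+1$ of the positive root $\mu=\alpha_k+\alpha_{k+1}+\dots+\alpha_m$. The base case (height $1$) is immediate: $e_\mu=\tilde e_\mu=e_{\al_k}$ and the formula reduces to the defining identity $\gm(e_{\al_k})=-q^{-h_{\al_k}}e_{\al_k}$ from the Hopf structure.

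For the inductive step, I would set $\mu'=\mu-\al_k=\al_{k+1}+\dots+\al_m$, which has height one less, and peel off the outermost bracket:
\begin{align*}
e_\mu &= [e_k,e_{\mu'}]_q \;=\; e_k e_{\mu'}-q\,e_{\mu'}e_k,\\
\tilde e_\mu &= q^2[e_k,\tilde e_{\mu'}]_{\bar q} \;=\; q^2 e_k\tilde e_{\mu'}-q\,\tilde e_{\mu'}e_k,
\end{align*}
where the prefactor $q^2$ comes from comparing $q^{2(m-k)}$ with $q^{2(m-k-1)}$ already built into $\tilde e_{\mu'}$. Now apply $\gm$, using antimultiplicativity together with the inductive hypothesis $\gm(\tilde e_{\mu'})=-q^{-h_{\mu'}}e_{\mu'}$ and the generator formula $\gm(e_k)=-q^{-h_k}e_k$. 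This produces a two-term expression of the form
$$
\gm(\tilde e_\mu)\;=\;q^2\bigl(q^{-h_{\mu'}}e_{\mu'}\bigr)\bigl(q^{-h_k}e_k\bigr)-q\bigl(q^{-h_k}e_k\bigr)\bigl(q^{-h_{\mu'}}e_{\mu'}\bigr).
$$

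The remaining step is a bookkeeping calculation: commute the Cartan elements past the $e$'s using $q^{h_\al}e_\bt q^{-h_\al}=q^{(\al,\bt)}e_\bt$. Since $\mu'=\al_{k+1}+\dots+\al_m$ contains exactly one simple root adjacent to $\al_k$, one has $(\al_k,\mu')=(\mu',\al_k)=-1$, so each reordering contributes a single factor of $q^{-1}$. Pulling $q^{-h_\mu}=q^{-h_k-h_{\mu'}}$ out to the left, the bracket collapses to
$$
\gm(\tilde e_\mu)\;=\;q^{-h_\mu}\bigl(q\,e_{\mu'}e_k-e_k e_{\mu'}\bigr)\;=\;-q^{-h_\mu}e_\mu,
$$
completing the induction.

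The only point requiring care is tracking the powers of $q$: the factor $q^{2(m-k)}$ in the definition of $\tilde e_\mu$, the prefactors $-q^{-h_\al}$ introduced by the antipode on simple root vectors, and the $q^{(\al,\bt)}$ factors from commuting Cartan exponentials. The numerology is arranged precisely so that everything cancels down to a single $q^{-h_\mu}$, which is the content of the lemma; I expect no other obstacle.
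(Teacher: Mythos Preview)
Your proof is correct and follows the same approach as the paper: induction on the height of $\mu$, using antimultiplicativity of $\gm$, the generator formula $\gm(e_\al)=-q^{-h_\al}e_\al$, and the commutation rule $q^{-h_\al}e_\bt=q^{-(\al,\bt)}e_\bt q^{-h_\al}$. The paper only writes out the height-$2$ case explicitly and then appeals to induction; your version spells out the full inductive step and is in fact more detailed than the paper's own argument.
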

\begin{proof}
If $\al<\bt$ are adjacent simple roots, then the following calculation
$$
\gm(q^2[e_\al, e_\bt]_{\bar q})=q^2[q^{-h_\bt}e_\bt, q^{-h_\al} e_\al]_{\bar q}
=q^{-h_{\al+\bt}}q[e_\bt, e_\al]_{\bar q}
=-q^{-h_{\al+\bt}}[e_\al, e_\bt]_q
$$
proves the statement for roots of height $2$. The general case is processed by
induction on the height of the root.
\end{proof}
In the classical limit $\mod \hbar$, the elements $e_\mu$ turn into positive
root vectors of the Borel subalgebra $\b^+\subset \g$ and  modulo $\hbar$ they coincide
with  $\tilde e_\mu$. For our purposes, we need both
$e_\mu$ and $\tilde e_\mu$.

The commutation relations among the root vectors $e_\mu$ are described below.
\begin{propn}
\label{prop_rel1}
Let  $\mu$ and $\nu$ be positive roots such that $\mu<\nu$.
\begin{enumerate}
\item Suppose that $\mu+\nu \in R^+$. Then
\be
[e_\mu,e_\nu]_{q}&=&e_{\mu+\nu}.
\label{complex_root_Serre}
\ee
\item Suppose that   $\mu+\nu \not \in R^+$. Then
\be
\quad [e_\nu,e_{\mu}]_{\bar q}&=&0, \quad \mu\cap \nu=\nu, \quad \nu<\mu-\nu\in R^+,
\label{boundary1}
\\[1pt]
[e_{\mu}, e_\nu]_{\bar q}&=&0
, \quad \mu\cap \nu=\nu, \quad \nu>\mu-\nu \in R^+,
\label{boundary2}
\\[1pt]
[e_\mu,e_\nu]&=&0,
\quad
\mu\cap \nu=\nu, \quad \mu-\nu \not \in R^+
\quad \mbox{or } \nu\cap \mu=\emptyset,
\label{not boundary0}
\\[1pt]
[e_{\mu},e_{\nu}]&=&-(q-q^{-1})e_{\mu\cup \nu}e_{\mu\cap \nu}
\nn\\&=&
-(q-q^{-1})e_{\mu\cap \nu}e_{\mu\cup \nu},\quad
\quad
\nu\not=\mu\cap \nu\in \R^+.
\label{not boundary}
\ee
\end{enumerate}
\end{propn}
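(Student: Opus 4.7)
The plan is to establish both parts by simultaneous induction on the total height $\mathrm{ht}(\mu) + \mathrm{ht}(\nu)$, with a secondary case split by the geometric relation of the intervals representing $\mu$ and $\nu$ as subsets of $\Pi_+$. The base case consists of simple roots: for adjacent $\al_k < \al_{k+1}$ the identity (\ref{complex_root_Serre}) holds by the very definition of $e_{\al_k + \al_{k+1}}$, while for non-adjacent simple roots the bracket vanishes by the defining relations of $\U_\hbar(\s\l(n))$. The Serre relations handle the second-order obstructions arising later in the induction.

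For the inductive step, the workhorse is the recursion $e_\nu = [e_j, e_{\nu - \al_j}]_q$, where $\al_j$ is the minimal simple root of $\nu$, combined with a $q$-weighted Jacobi identity that rewrites $[e_\mu, [e_j, e_{\nu - \al_j}]_q]_q$ as a signed sum of two inner commutators, each of strictly smaller total height. Applying the inductive hypothesis to those inner commutators then collapses the expression. For part (1), adjacency of the intervals forces one of the inner commutators to telescope into $e_{\mu+\nu}$ via the defining recursion, while the companion term vanishes by the disjoint-intervals clause of (\ref{not boundary0}). For part (2), the geometric subcases sort out as follows: disjoint or middle-nested intervals give vanishing commutators through Serre relations and (\ref{not boundary0}) at smaller height; boundary nesting of $\nu$ inside $\mu$ is reduced to (\ref{complex_root_Serre}) combined with a conversion between $[\cdot,\cdot]_q$ and $[\cdot,\cdot]_{\bar q}$, which is precisely the setting in which the $\tilde e_\mu$ vectors and Lemma \ref{tilde-antipod} intervene; partial overlap yields (\ref{not boundary}), with the correction $-(q - q^{-1}) e_{\mu \cup \nu} e_{\mu \cap \nu}$ emerging as the mismatch between the $q$- and $\bar q$-bracket versions of the Serre relation at the overlapping simple root.

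The main obstacle is the partial-overlap case (\ref{not boundary}). On the one hand, the scalar $-(q - q^{-1})$ must appear with the correct sign, which requires tracking weight-dependent prefactors in the $q$-Jacobi identity through several iterations of the recursion. On the other hand, the two equalities in (\ref{not boundary}) demand $[e_{\mu \cap \nu}, e_{\mu \cup \nu}] = 0$, which is itself an instance of the boundary-nesting case at smaller total height; coherence of the induction therefore depends on a careful ordering of the cases so that the auxiliary commutation is available when needed. A subsidiary bookkeeping task is to decide at each step whether $e_\mu$ or $\tilde e_\mu$ is the natural form to manipulate: the $\bar q$-commutators in (\ref{boundary1}) and (\ref{boundary2}) are naturally written in terms of $\tilde e_\mu$, and Lemma \ref{tilde-antipod} (together with the antipode axioms) supplies the conversion whenever one needs to pass from the $\tilde e$ picture to the $e$ picture.
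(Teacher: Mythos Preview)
Your inductive framework on total height and the use of a $q$-Jacobi identity are in the right spirit, and your treatment of part (1) matches the paper's. However, your handling of the cases in part (2) diverges from the paper's argument in a way that leaves a genuine gap.

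For the boundary-nesting relations (\ref{boundary1}) and (\ref{boundary2}), you propose to reduce them to (\ref{complex_root_Serre}) together with a ``conversion between $[\cdot,\cdot]_q$ and $[\cdot,\cdot]_{\bar q}$'' furnished by Lemma~\ref{tilde-antipod}. This does not work. What is needed here is a generalized Serre relation: if $\mu = \nu + \gamma$ with $\nu < \gamma$, then (\ref{boundary1}) says $[e_\nu,[e_\nu,e_\gamma]_q]_{\bar q}=0$. Lemma~\ref{tilde-antipod} only tells you that $\gamma(\tilde e_\mu) = -q^{-h_\mu}e_\mu$; applying the antipode (an anti-homomorphism) to such an expression reverses products and inserts Cartan factors, but does not manufacture the needed double-commutator identity. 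The paper instead invokes a dedicated technical result, Lemma~\ref{Great Auxiliary} in the Appendix, which asserts precisely that if $x,y,z$ satisfy Serre-type relations pairwise (with $[x,z]=0$), then $[x,y]_a$ inherits the Serre relation with $z$. This is the engine that propagates (\ref{boundary1})--(\ref{boundary2}) up in height; your proposal lacks an equivalent mechanism.

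Similarly, for the middle-nested case of (\ref{not boundary0}) (where $\mu = \nu' + \nu + \nu''$ with $\nu',\nu''\in R^+$), the paper does not use a naive recursion on $\nu$ but applies another Appendix lemma, Lemma~\ref{lemma_xyz}, which establishes $[y,[x,[y,z]_q]_q]=0$ under the relevant hypotheses. Your sketch (``vanishing commutators through Serre relations and (\ref{not boundary0}) at smaller height'') does not make clear how the recursion closes here, since decomposing $\nu$ produces brackets of the form $[e_\mu, e_{\alpha_j}]$ with $\alpha_j$ strictly interior to $\mu$, which do not obviously fall under a smaller instance of the proposition without exactly the kind of auxiliary identity the paper isolates. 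Finally, the paper derives (\ref{not boundary}) last, by direct manipulation using the already-established (\ref{complex_root_Serre})--(\ref{not boundary0}); your ordering of cases and the role you assign to $\tilde e_\mu$ should be revisited accordingly.
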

\begin{proof}
Formula (\ref{complex_root_Serre}) is just the definition of $e_{\mu+\nu}$ if the height of $\mu$ is $1$.
If $\mu=\al_i+\ldots +\al_{k-1}$ with $k>i$, put $\mu'=\al_i+\ldots +\al_{k-1}$; then
Then
$$
[e_\mu,e_\nu]_{q}=[[e_\mu',e_{k-1}]_{q},e_\nu]_{q}=[e_\mu',[e_{k-1},e_\nu]_{q}]_{q}=e_{\mu+\nu}.
$$
by induction on the height of $\mu$.

Commutation relations (\ref{boundary1}) and (\ref{boundary2})
 are generalizations of the Serre relations and  follow from  Lemma \ref{Great Auxiliary} by induction on heights
of $\mu$ and $\nu$.
The case  $\mu+\nu,\mu-\nu\not \in R^+$ falls  either in situation
(\ref{not boundary0}) or (\ref{not boundary}).
Let us check (\ref{not boundary0}). The case $\mu\cap \nu=\emptyset$ is clear. The alternative is $\mu=\nu'+\nu+\nu''$, where $\nu',\nu''\in R^+$ and $\nu'<\nu'<\nu''$.
Then (\ref{not boundary0}) follows from  (\ref{xyyz}), where
we put $x=e_{\nu'}$, $y=e_{\nu}$,  $z=e_{\nu''}$.

The second equality in (\ref{not boundary}) follows from (\ref{not boundary0}).
To prove the first equality, put,
under the assumption of (\ref{not boundary}),
$
\mu=\mu'+\mu\cap \nu
$
for some $\mu',\nu''\in R^+$ such that $\mu'<\mu\cap \nu < \nu ''$.
To check the formula  (\ref{not boundary}), pull the root vectors
$e_{\mu'}$ and $e_{\mu\cap \nu}$ to the right in
$
(e_{\mu'}e_{\mu\cap \nu}-qe_{\mu\cap \nu}e_{\mu'})
e_{\nu}
$
using already proved (\ref{complex_root_Serre}), (\ref{boundary1}), (\ref{boundary2}),
and  (\ref{not boundary0}). This gives  (\ref{not boundary}).
\end{proof}

The elements $\tilde e_{\mu}$ satisfy similar identities, which can be readily derived from these by applying the antipode anti-isomorphism.

Root vectors of the negative Borel subalgebra are defined as follows.
Let $\omega$ be the involutive automorphism of $\U_\hbar(\s\l(n))$ defined on the simple root vectors by
$e_\al \leftrightarrow -f_\al, h\to -h$.
Introduce $f_\mu$ as $-(-q)^{k-1}\omega(e_\mu)$, where $k$ is the height of $\mu$.
Explicitly, for $\mu=\al_k+\al_{k+1}+\ldots+\al_m$, $k<m$, put
$$
f_\mu=[f_{m},[f_{m-1},\ldots [f_{k+1},f_{k}]_{\bar q}\ldots]_{\bar q}]_{\bar q},
$$
where $f_i=f_{\al_i}$.
The elements $f_\mu$, $\mu\in R^+$ generate a PBW basis of $\U_\hbar(\n^-)$, which can be obtained
by the isomorphism $\omega\colon \U_\hbar(\n^+)\to \U_\hbar(\n^-)$.
The commutation relations on $f_\mu$ can be derived from  Proposition \ref{prop_rel1} by applying the automorphism $\omega$.

Further we need more commutation relations among the elements of $\U_\hbar(\s\l(n))$.

\begin{propn}
\label{prop_rel2}
For every positive roots $\gm$
$$
[e_{\gm},f_{\gm}]=\frac{q^{h_\gm}-q^{-h_\gm}}{q-q^{-1}}.
$$
If  $\mu<\gm$ and   $\mu+\gm$ is a root, then
$$
[e_{\gm},f_{\gm+\mu}]=-q^{-1}f_{\mu}q^{-h_\gm},
\quad
[e_{\gm},f_{\mu+\gm}]=f_{\mu}q^{h_\gm},
\quad
[f_{\gm},e_{\gm+\mu}]=f_{\mu}q^{h_\gm},
\quad
[f_{\gm},e_{\mu+\gm}]=-q e_{\mu}q^{-h_\gm},
$$
$$
[f_{\gm},e_{\mu+\gm+\nu}]=0,
\quad
[e_{\gm},f_{\mu+\gm+\nu}]=0,
$$
$$
[e_{\mu+\gm},f_{\gm+\nu}]
=(q-q^{-1}) f_{\nu}e_\mu q^{-h_\gm}.
$$
\end{propn}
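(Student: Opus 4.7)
The plan is to proceed by induction on heights, using the inductive definitions of $e_\mu$ and $f_\mu$ as nested $q$-commutators, the commutation relations among the $e_\mu$ established in Proposition \ref{prop_rel1}, and systematic application of the classical Leibniz rule $[ab,c] = a[b,c] + [a,c]b$.

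First I would establish the diagonal identity $[e_\gm, f_\gm] = (q^{h_\gm} - q^{-h_\gm})/(q-q^{-1})$ by induction on $\mathrm{ht}(\gm)$. The base case is the defining $\U_\hbar(\s\l(n))$-relation for simple roots. For the inductive step, write $\gm = \al + \gm'$ with $\al = \al_k$ the smallest simple root in $\gm$ and $\gm' = \gm - \al$; then $e_\gm = [e_\al, e_{\gm'}]_q$ and, after unfolding one step of the nested $q$-bracket defining $f_\gm$ (and using an auxiliary Serre-type identity to move the innermost factor to the outside), $f_\gm = [f_{\gm'}, f_\al]_{\bar q}$. Expanding $[e_\gm, f_\gm]$ by Leibniz, the off-diagonal brackets $[e_\al, f_{\gm'}]$ and $[e_{\gm'}, f_\al]$ vanish because $\al_k$ does not occur among the simple roots of $\gm'$ and the mixed simple-root relation $[e_i, f_j] = \delta_{ij}[h_i]_q$ propagates to higher-height $f$'s. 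The surviving diagonal pieces reduce via the induction hypothesis applied to $[e_{\gm'}, f_{\gm'}]$ together with the base identity $[e_\al, f_\al]$; tracking the Cartan factors $q^{\pm h_\al}$ and $q^{\pm h_{\gm'}}$ produced when moving root vectors past $q^{\pm h}$ yields the claimed formula.

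For the mixed identities $[e_\gm, f_{\gm+\mu}]$ and $[e_\gm, f_{\mu+\gm}]$ (and their $f_\gm$-side counterparts), I would rewrite the relevant $f$-PBW element as a one-step $q$-bracket, via the $\om$-image of the relation $[e_\mu, e_\gm]_q = e_{\mu+\gm}$ from Proposition \ref{prop_rel1}, and then apply Leibniz. After expansion, the surviving contribution comes from $[e_\gm, f_\gm]$ just proved, whose Cartan element is pushed past the remaining $f_\mu$ to generate the factor $q^{\pm h_\gm}$; the sign and $q$-power depend on the relative chain-orientation of $\mu$ and $\gm$, which is what distinguishes the two notational variants $\gm+\mu$ and $\mu+\gm$. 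The $[f_\gm,\cdot]$ identities follow by applying the involution $\om$ to the $[e_\gm,\cdot]$ ones, with the $-(-q)^{k-1}$ twist in the definition of $f_\mu$ accounting for the $-q$ versus $-q^{-1}$ prefactors. The vanishing statements $[f_\gm, e_{\mu+\gm+\nu}] = 0 = [e_\gm, f_{\mu+\gm+\nu}]$ come from expanding $e_{\mu+\gm+\nu}$ as a double $q$-bracket in $e_\mu$, $e_\gm$, $e_\nu$: the Cartan contribution from the inner $[e_\gm, f_\gm]$ gets squeezed between $e_\mu$ and $e_\nu$ and produces two terms whose $q$-weight corrections cancel after commuting the Cartan element to one side.

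The final identity $[e_{\mu+\gm}, f_{\gm+\nu}] = (q-q^{-1}) f_\nu e_\mu q^{-h_\gm}$ is the most intricate. Writing $e_{\mu+\gm} = [e_\mu, e_\gm]_q$ and $f_{\gm+\nu}$ as the corresponding $q$-bracket in $f_\gm$ and $f_\nu$, repeated Leibniz reduces the commutator to a combination of the earlier identities, namely $[e_\gm, f_\gm]$, $[e_\mu, f_\gm] = 0$, and $[e_\gm, f_{\gm+\nu}]$; the factor $(q-q^{-1})$ emerges precisely from the failure of $q^{\pm h_\gm}$ to commute with $e_\mu$. The main obstacle throughout is the careful bookkeeping of $q$-powers: every Leibniz step produces correction factors $q^{\pm(\al,\bt)}$ from root pairings, and each push of a root vector past a Cartan element contributes an additional $q$-power. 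Organising the case analysis in parallel with Proposition \ref{prop_rel1}, and distinguishing whether the attached root lies to the left or the right of $\gm$ in the simple-root chain, accounts for the bulk of the work.
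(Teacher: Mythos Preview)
Your approach is essentially the same as the paper's: both proceed by peeling off one simple root so that $\gm=\al+\gm'$ with $e_\gm=[e_\al,e_{\gm'}]_q$ and $f_\gm=[f_{\gm'},f_\al]_{\bar q}$, expand via Leibniz/Jacobi-type identities, feed in the inductive hypothesis for $[e_{\gm'},f_{\gm'}]$, and then reuse the diagonal formula for the mixed and vanishing identities. Your observation that the $[f_\gm,e_{\ldots}]$ statements follow from the $[e_\gm,f_{\ldots}]$ ones by applying the involution $\om$ is a clean shortcut; the paper simply omits those three computations, leaving them implicit.
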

The proof of these formulas is given in Appendix.
\begin{corollary}
\label{prop_rel3}
For every positive roots $\nu$ and $\mu$ such that $\nu+\mu\in R^+$, and positive integer $k$,
\be
[e_{\mu},f^k_{\mu+\nu}]&=&-q^{-1}\frac{q^{2k}-1}{q^{2}-1}f_{\mu+\nu}^{k-1}f_{\nu}q^{-h_\mu},
\nn
\\[1pt]
[e_{\nu},f_{\mu+\nu}^k]&=&
q^{-k+1}\frac{q^{2k}-1}{q^{2}-1}f_{\mu}f_{\mu+\nu}^{k-1}q^{h_\nu}.\nn
\\[1pt]
[e_{\nu},f^k_{\nu}]&=&
f^{k-1}_{\nu}\Bigl( q^{h_\nu+1}\frac{1-q^{-2k}}{(q-q^{-1})^2}+q^{-h_\nu-1}\frac{1-q^{2k}}{(q-q^{-1})^2}\Bigr)
,\nn
\ee
\end{corollary}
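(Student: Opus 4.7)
The plan is to establish each of the three identities by induction on the exponent $k$, taking the base case $k=1$ directly from Proposition \ref{prop_rel2}. The inductive mechanism in all three cases is the standard derivation-like rule
$$
[e,f^{k+1}]=[e,f]f^k+f[e,f^k],
$$
combined with the $q$-shift relations
$$
q^{\pm h_\al}f_\bt=q^{\mp(\al,\bt)}f_\bt q^{\pm h_\al},
$$
which are needed to commute the Cartan factors $q^{\pm h}$ through the monomial $f_{\mu+\nu}^{k-1}$ (or $f_\nu^{k-1}$).

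For the first identity, I would substitute the base case $[e_\mu,f_{\mu+\nu}]=-q^{-1}f_\nu q^{-h_\mu}$ and push $f_\nu q^{-h_\mu}$ to the right across $f_{\mu+\nu}^{k}$. The Cartan part produces the factor $q^{-(\mu,\mu+\nu)}=q^{-1}$ at each step (type $A$ is simply-laced and $(\mu,\mu+\nu)=1$), while $f_\nu$ and $f_{\mu+\nu}$ are $q$-commuting by the image under $\om$ of the boundary relations in Proposition \ref{prop_rel1}. Collecting the geometric series $1+q^{2}+q^{4}+\dots+q^{2(k-1)}=\frac{q^{2k}-1}{q^2-1}$ yields the claim. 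The second identity is proved in exactly the same way, starting from $[e_\nu,f_{\mu+\nu}]=f_\mu q^{h_\nu}$ and tracking the sign of the exponent in $q^{h_\nu}f_{\mu+\nu}=q^{-1}f_{\mu+\nu}q^{h_\nu}$, which accounts for the prefactor $q^{-k+1}$.

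For the third identity I would apply the same Leibniz step to $[e_\nu,f_\nu^{k+1}]$ using $[e_\nu,f_\nu]=(q^{h_\nu}-q^{-h_\nu})/(q-q^{-1})$ and commute the two terms $q^{\pm h_\nu}$ through $f_\nu^k$, producing $q^{\mp 2k}f_\nu^k q^{\pm h_\nu}$. Rewriting the inductive hypothesis in the shape
$$
[e_\nu,f_\nu^k]=f_\nu^{k-1}\!\left(q^{h_\nu}\tfrac{q-q^{1-2k}}{(q-q^{-1})^2}+q^{-h_\nu}\tfrac{q^{2k-1}-q^{-1}}{(q-q^{-1})^2}\right),
$$
absorbing one further factor of $f_\nu$ and adding the Leibniz contribution reduces, after elementary manipulation of powers of $q$, to the stated formula.

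The main obstacle is purely combinatorial: keeping accurate track of the $q^{\pm h_\nu}$ factors as they migrate across powers of $f_{\mu+\nu}$ or $f_\nu$, and verifying that the resulting geometric sums telescope into the claimed $q$-integer $\frac{q^{2k}-1}{q^2-1}$ (respectively into the two-term expression in the third identity). No further structural input from Proposition \ref{prop_rel1} or \ref{prop_rel2} is required beyond the base cases and the boundary $q$-commutators between $f_\nu$ and $f_{\mu+\nu}$.
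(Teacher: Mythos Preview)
Your proposal is correct and follows essentially the same route as the paper. The paper writes out the full Leibniz expansion $[e,f^k]=\sum_{j}f^{j}[e,f]f^{k-1-j}$ directly rather than phrasing it as induction, but the computational content is identical: insert the $k=1$ relations from Proposition \ref{prop_rel2}, commute the Cartan factors $q^{\pm h}$ and the stray $f_\nu$ (resp.\ $f_\mu$) past the remaining powers of $f_{\mu+\nu}$ using the boundary $q$-commutators, and collect the resulting geometric series in $q^2$.
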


\section{The invariant form in simple case}
In this section we calculate the invariant form on scalar PVM
assuming $\l=\g\l(n)\oplus \g\l(1)\subset \g\l(n+1)=\g$.
The positive root system of $\l$ is generated by the simple roots
$\al_2,\ldots \al_{n}$. The nil-radical Lie algebras $\n^\pm$ are spanned by the root vectors corresponding to
the roots $\pm\al_1,\pm(\al_1+\al_2),\ldots,\pm (\al_1+\ldots+\al_n)$.

Introduce generators
$$x_1=e_{\al_1},\quad x_2=e_{\al_1+\al_2}, \quad \ldots,  \quad x_n=e_{\al_1+\ldots + \al_n},$$
$$\tilde x_1=\tilde e_{\al_1},\quad \tilde x_2=\tilde e_{\al_1+\al_2}, \quad \ldots,  \quad \tilde  x_n=\tilde e_{\al_1+\ldots + \al_n},$$
$$y_1=f_{\al_1},\quad y_2=f_{\al_1+\al_2}, \quad \ldots,  \quad y_n=f_{\al_1+\ldots +\al_n}.$$
The subalgebras $\U_\hbar(\n^+)$ and $\U_\hbar(\n^-)$ are generated, respectively, by $\{x_i\}$, and $\{y_i\}$.

Let $v_\la$ be the generator of the PVM,  $M^+_\la$, induced from a character $\la\in \h^*$ of the Levi subalgebra $\l$.
The weight $\lambda$ is proportional
to the basis weight $\ve_1$ in the standard orthogonal basis of the  $\g\l(n+1)$ weight space.
With abuse of notation, we will use the same symbol for the coefficient
$\la \sim  \la\ve_1$. This should not cause any confusion in the context.

The generator of the dual module $M^-_{-\la}$ will be denoted by $v_{-\la}$.
The monomials
$$
y_n^{m_n}y_{n-1}^{m_{n-1}}...y_1^{m_1}v_{\la}\in M^+_\la, \quad
\tilde x_n^{m_n}\tilde x_{n-1}^{m_{n-1}}...\tilde x_1^{m_1}v_{\la}\in M^-_{-\la},
$$
where  $m_i$ are non-negative integers, form a basis in $M_\la^+$ and, respectively, in $M_{-\la}^-$.

\begin{lemma}
The matrix coefficient
$\langle \tilde x_n^{k_n}\tilde x_{n-1}^{k_{n-1}}\ldots
\tilde x_1^{k_1}v_{-\la},y_n^{m_n}y_{n-1}^{m_{n-1}}\ldots y_1^{m_1}v_{\la}\rangle$
vanishes unless $k_i=m_i$, for all $i=1,\ldots, n$.
\end{lemma}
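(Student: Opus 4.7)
The proof is essentially a weight-counting argument. The plan is to exploit the fact that the pairing $\langle \cdot, \cdot \rangle$ between $M^-_{-\la}$ and $M^+_{\la}$ is $\U_\hbar(\g)$-invariant, hence in particular $\U_\hbar(\h)$-invariant, so it can be non-zero only on pairs of weight vectors whose weights sum to zero.

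First I would record the weights of the generators. Since $y_i=f_{\al_1+\ldots+\al_i}$, the element $y_i$ has weight $-(\al_1+\ldots+\al_i)=\ve_{i+1}-\ve_1$ with respect to the Cartan action, while $\tilde x_i=\tilde e_{\al_1+\ldots+\al_i}$ has the opposite weight $\ve_1-\ve_{i+1}$. Since the generating vector $v_\la\in M^+_\la$ has weight $\la\ve_1$ and $v_{-\la}\in M^-_{-\la}$ has weight $-\la\ve_1$, the weights of the basis vectors are
\[
\mathrm{wt}\bigl(y_n^{m_n}\ldots y_1^{m_1}v_\la\bigr)=\la\ve_1+\sum_{i=1}^n m_i(\ve_{i+1}-\ve_1),
\]
\[
\mathrm{wt}\bigl(\tilde x_n^{k_n}\ldots \tilde x_1^{k_1}v_{-\la}\bigr)=-\la\ve_1+\sum_{i=1}^n k_i(\ve_1-\ve_{i+1}).
\]

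Next, invariance of the pairing under the Cartan generators $h_\al$ forces the sum of these two weights to vanish whenever the matrix coefficient is non-zero. Adding the expressions above, the $\la\ve_1$ terms cancel and we obtain
\[
\sum_{i=1}^n (k_i-m_i)(\ve_1-\ve_{i+1})=0.
\]
The vectors $\ve_1-\ve_{i+1}$ for $i=1,\ldots,n$ are linearly independent in the weight space of $\g\l(n+1)$, so this equality forces $k_i=m_i$ for every $i$, which is the claim.

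There is no real obstacle here: once the generators $\tilde x_i$ and $y_i$ are identified as root vectors, everything reduces to the linear independence of $\{\ve_1-\ve_{i+1}\}_{i=1}^n$. The only point worth double-checking is that $\tilde x_i$ indeed has the same weight as $x_i=e_{\al_1+\ldots+\al_i}$, which is immediate from its definition as a quantum nested commutator in the $e_{\al_j}$.
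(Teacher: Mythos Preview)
Your proof is correct, and it is genuinely different from the paper's argument.

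You use only the $\U_\hbar(\h)$-invariance of the pairing together with the linear independence of the weights $\beta_i=\al_1+\ldots+\al_i=\ve_1-\ve_{i+1}$, $i=1,\ldots,n$. This is clean and requires nothing beyond the definition of the modules and the stated invariance of the form.

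The paper instead proceeds by direct computation: it first uses Lemma~\ref{tilde-antipod} to rewrite the matrix coefficient as (a scalar multiple of)
\[
\langle v_{-\la},\, x_1^{k_1}\ldots x_n^{k_n}\, y_n^{m_n}\ldots y_1^{m_1} v_{\la}\rangle,
\]
and then pushes the $x_n$'s to the right through the $y_i$'s using the explicit commutation relations of Proposition~\ref{prop_rel2} and Corollary~\ref{prop_rel3}. One checks that commuting $x_n$ past $y_{n-1},\ldots,y_1$ produces factors (like $e_{\al_n}$) that annihilate $v_\la$, so only the commutators with $y_n^{m_n}$ contribute; comparing $k_n$ with $m_n$ and then inducting on $n$ gives the result.

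The trade-off is this: your argument settles the lemma with no work, but tells you nothing about the surviving matrix coefficients. The paper's hands-on commutation argument is longer, but it is exactly the calculation that is continued in the next proposition to obtain the explicit value
\[
(-1)^{|\mb|} q^{-\psi(\mb)} \prod_{i=1}^n \hat m_i!\, \prod_{j=0}^{|\mb|-1} [\la-j]_q
\]
of the non-vanishing coefficients. So the paper's proof is really the opening move of that larger computation, whereas yours is a self-contained shortcut for the vanishing statement alone.
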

\begin{proof}
Since $\gm(\tilde x_i)=q^{-h_\mu}x_\mu$, this matrix coefficient is proportional
to
$$
\langle v_{-\la}, x_1^{k_1}\ldots
 x_{n-1}^{k_{n-1}}x_n^{k_n}y_n^{m_n}y_{n-1}^{m_{n-1}}\ldots y_1^{m_1}v_{\la}
 \rangle .
 $$
Suppose first that $k_n>m_n$. Commutation of $x_n$ with $y_n^{m_n}$ reduces
the degree $m_n$ by one and produces a factor from $\U_\hbar(\h)$.
Pushing it further to the right we get a $e_{\al_n}$-factor by commutation with $y_{n-1}$.
This factor commutes with all elements $y_i$, $i=1,\ldots n-1$, and can be placed to the
rightmost position, where it annihilates $v_\la$. Similar effect will be produced  by commutation of  $x_n$ with other
$y_i$. Thus, only the term from commutation with $y_n^{m_n}$ survives on the way of $x_n$ to the right.
Repeating this for other $x_n$-factors, we see that the matrix coefficient vanishes if $k_n>m_n$.
If $k_n<m_n$, similar arguments can be used when pushing $y_n$ to the left till they meet $v_{-\la}$.

Thus, the only possibility for the matrix coefficient to survive is $k_n=m_n$.
In this case, commutation of $x_n^{m_n}$ with $y_n^{m_n}$ produces an element from $\U_\hbar(\h)$, which
in its turn gives rise to a scalar factor. This reduces the consideration to the case to $m_n=k_n=0$,
and one can repeat the above reasoning. The obvious induction on $n$ completes the proof.
\end{proof}
Further we  calculate the matrix coefficients explicitly.
Let $\Z_+$ denote set of non-negative integers.
Fix an $n$-tuple $\mb=(m_1,\ldots    , m_n)\in \Z_+^n$ and
put $|\mb|=\sum_{i=1}^{n}m_i$. It is convenient to
pass to "quantum integers" $\hat k= q^{k-1}[k]_q=\sum_{i=0}^{k-1}q^{2i}$,
for non-negative $k$.

\begin{propn}
The non-vanishing matrix coefficients of the invariant paring are given by
\be
\langle \tilde x_n^{m_n}\tilde x_{n-1}^{m_{n-1}}\ldots
\tilde x_1^{m_1}v_{-\la},y_n^{m_n}y_{n-1}^{m_{n-1}}\ldots y_1^{m_1}v_{\la}\rangle
=(-1)^{|\mb|}
q^{-\psi(\mb)}\prod_{i=1}^n\hat m_i!\prod_{j=0}^{|\mb|-1}[\la-j]_q.
\label{mat_coef}
\ee
where
$
\psi(\mb)=(\la+\frac{1}{2}) |\mb|-\frac{1}{2}|\mb|^2
$.
\end{propn}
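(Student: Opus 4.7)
My plan is to prove the formula by induction on $|\mb|$, using the antipode identity together with the $q$-commutation structure of $\U_\hbar(\n^\pm)$ (which is ``$q$-abelian'' in the Hermitian-symmetric case at hand) to reduce the matrix coefficient to a sequence of quantum-$\s\l(2)$ Shapovalov calculations.

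First, I would rewrite the pairing via the defining formula $\langle u\tp\al,v\tp a\rangle=\langle\al,[\gm(u)v]_\l a\rangle$ of Section~2: the target matrix element equals the value of the character $\la$ on the Levi projection of $\gm(\tilde x_n^{m_n}\cdots\tilde x_1^{m_1})\,y_n^{m_n}\cdots y_1^{m_1}$. By Lemma~\ref{tilde-antipod} and the antihomomorphism property of $\gm$,
\[
\gm\bigl(\tilde x_n^{m_n}\cdots\tilde x_1^{m_1}\bigr)=(-1)^{|\mb|}\prod_{i=1}^n\bigl(q^{-h_{\mu_i}}x_i\bigr)^{m_i},
\]
where $\mu_i=\al_1+\cdots+\al_i$. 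Exploiting $(\mu_i,\mu_j)=1+\delta_{ij}$, I would normal-order the Cartan factors to the right of the $x$-monomial, accumulating an explicit scalar prefactor and leaving the monomial $x_1^{m_1}\cdots x_n^{m_n}\cdot\prod_i q^{-m_ih_{\mu_i}}$.

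Next, I would commute $x_n^{m_n}$ past the full $y$-chain $y_n^{m_n}\cdots y_1^{m_1}$ using Propositions~\ref{prop_rel1}--\ref{prop_rel2} and Corollary~\ref{prop_rel3}. The ``diagonal'' interaction $[x_n,y_n^{m_n}]$ is the quantum-$\s\l(2)$ formula of Corollary~\ref{prop_rel3}, contributing $[m_n]_q\,y_n^{m_n-1}[h_{\mu_n}-m_n+1]_q$; the ``off-diagonal'' interactions $[x_n,y_j]$, $j<n$, from Proposition~\ref{prop_rel2}, produce root vectors $f_{\al_{j+1}+\cdots+\al_n}$ of the $\s\l(n)$-factor inside $[\l,\l]$, decorated with Cartan. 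Since the character $\la$ is proportional to $\ve_1$, while the $\g\l(n)$-factor of $\l$ acts on $\ve_2,\ldots,\ve_{n+1}$, $\la$ annihilates $\U_\hbar([\l,\l])$ entirely; therefore any normal-ordered term retaining a $[\l,\l]$-factor contributes zero to the character evaluation. Only the diagonal chain survives, and the matrix element with $\mb$ recursively reduces to the one with $(m_1,\ldots,m_{n-1},m_n-1)$ and $\la$ shifted by the total weight of the residue.

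The main obstacle is the meticulous accounting of $q$-exponents. The quadratic contribution $\tfrac12|\mb|^2$ in $\psi(\mb)$ originates from pairwise commutations of Cartan factors through the $x$- and $y$-monomials; the linear contribution $(\la+\tfrac12)|\mb|$ combines the antipode-induced shifts $q^{-m_ih_{\mu_i}}$ with the evaluations of $[h_{\mu_k}-k+1]_q$ on successively weight-shifted states. With the base case $n=1$ being the classical quantum-$\s\l(2)$ Shapovalov computation on the Verma module $M^+_\la$, iterating the recursion produces the combinatorial factor $\prod_i\hat m_i!$ and the telescoping product $\prod_{j=0}^{|\mb|-1}[\la-j]_q$. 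A secondary subtlety is the rigorous justification that the off-diagonal contributions really all vanish, which should follow from the character property of $\la$ together with the observation that elements of $[\l,\l]$ cannot acquire a $\g\l(1)$-central Cartan component through further commutations with the $x_k,y_k$ generators.
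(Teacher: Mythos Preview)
Your proposal is correct and follows essentially the same route as the paper's proof: apply Lemma~\ref{tilde-antipod} to trade $\tilde x_i$ for $q^{-h_{\mu_i}}x_i$, normal-order the Cartan factors to extract the $q^{-\psi(\mb)}$ prefactor, and then reduce the remaining matrix element $\langle v_{-\la},x_1^{m_1}\cdots x_n^{m_n}y_n^{m_n}\cdots y_1^{m_1}v_\la\rangle$ to an iterated quantum $\s\l(2)$ Shapovalov computation via Corollary~\ref{prop_rel3}. The paper organizes the last step as a direct product of factors $Z_q[j,\la-\sum_{l<i}m_l]=[j]_q[\la-\sum_{l<i}m_l-j+1]_q$ rather than as an induction on $|\mb|$, but this is only a difference in packaging.

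Two small points to clean up. First, the off-diagonal commutators $[x_n,y_j]$ for $j<n$ have weight $\al_{j+1}+\cdots+\al_n>0$, so they produce $e_{\al_{j+1}+\cdots+\al_n}$ (decorated with Cartan), not $f_{\al_{j+1}+\cdots+\al_n}$ as you wrote. Second, the clean reason these terms die is exactly the one given in the paper's preceding lemma: the resulting positive $\l$-root vector commutes with all remaining $y_i$ (the supports $\{\al_{j+1},\ldots,\al_n\}$ and $\{\al_1,\ldots,\al_i\}$, $i<j$, are disjoint), so it slides to the far right and kills $v_\la$. Your phrasing ``$\la$ annihilates $\U_\hbar([\l,\l])$'' is equivalent once that commutation is made explicit; the ``secondary subtlety'' you flag is thus already resolved by this observation, and no further argument about Cartan components is needed.
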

\begin{proof}
Applying Lemma \ref{tilde-antipod} we get for the matrix coefficient
$$
(-1)^{\sum_{i=1}^{n}m_i}
\langle v_{-\la}, (q^{-h_1} x_1)^{m_1} \ldots  (q^{-h_{n}}x_n)^{m_n}y_n^{m_n}\ldots y_1^{m_1}v_{\la}
\rangle =$$
$$
=(-1)^{\sum_{i=1}^{n}m_i}
q^{-\la\sum_{i=1}^{n}m_i+\frac{1}{2}(\sum_{i=1}^{n}m_i)(\sum_{i=1}^{n}m_i-1)
+\frac{1}{2}\sum_{i=1}^{n}m_i(m_i-1)}\times
$$
$$
\times
\langle v_{-\la}, x_1^{m_1} \ldots  x_n^{m_n}y_n^{m_n}\ldots y_1^{m_1}v_{\la}
\rangle .$$
The numeric coefficient is equal to
$(-1)^{|\mb|}q^{-\la |\mb|-|\mb|+\frac{1}{2}|\mb|^2
+\frac{1}{2}\sum_{i=1}^n m_i^2}$.
For every non-negative integer $m$ and a complex parameter $z$ denote
$$
Z_q[m,z]=\frac{1}{q-q^{-1}}(q^{z}\frac{1-q^{-2m}}{1-q^{-2}}-q^{-z}\frac{1-q^{2m}}{1-q^{2}})
=[m]_q[z-m+1]_q.
$$

The matrix coefficient
$
\langle v_{-\la}, x_1^{m_1} \ldots x_n^{m_n}y_n^{m_n}\ldots y_1^{m_1}v_{\la}\rangle
$
is found  to be
$$
\frac{1}{q-q^{-1}}(q^{\la-\sum_{i=1}^{n-1}m_i}\frac{1-q^{-2 m_n}}{1-q^{-2}}-
q^{-\la+\sum_{i=1}^{n-1}m_i}\frac{1-q^{2 m_n}}{1-q^{2}})
\times \ldots=Z_q[m_n,\la-\sum_{l=1}^{n-1}m_i]\times\ldots
$$
In the product omitted on the right, we go down  from
$m_n$ to $1$ in the first argument of $Z_q$. Then repeat the procedure as though the dimension of the vector
$\mb$ were $n-1$ rather then $n$, and proceed until we get to $n=0$.
The result for the matrix coefficient will be
$$
\prod_{i=1}^n\prod_{j=1}^{m_i}
Z_q[j,\la-\sum_{l=1}^{i-1}m_i]
=\prod_{i=1}^n[m_i]_q!
\prod_{j=0}^{|\mb|-1}[\la-j]_q.
$$
To complete the proof, one should pass to the q-integers $\hat m_i$.
\end{proof}
Let $\C_\hbar[G]$ denote the affine coordinate ring on the quantum group $GL_q(n+1)$ (the Hopf
dual to the quantized universal enveloping algebra $\U_\hbar\bigl(\g\l(n+1)\bigr)$.)
Denote by $\C_\hbar[G]^\l$ the subalgebra of $\U_\hbar(\l)$-invariants in
$\C_\hbar[G]$ under the left co-regular action. This space naturally inherits the right co-regular action of $\U_\hbar(\g)$
compatible
with the multiplication  $\cdot_\hbar$. It is known that $\cdot_\hbar$ is a star product, \cite{T}.

Notice that the tensors $y_i\tp \tilde x_i$ commute with each other,
as follows from Proposition \ref{prop_rel1}. For the reasons that
will be clear later, we would like to modify $y_i$ to $\tilde y_i$
in such a way that the tensors $D_i=\tilde y_i\tp  \tilde x_i$, $i=1,\ldots,n$ satisfy
the quantum plane relations
\be
D_j D_i= q^{2} D_i D_j, \quad j<i.
\label{q-plane}
\ee Another condition on this transformation
is to leave the matrix coefficients of the invariant  paring untouched.
This can be achieved by means of the replacement
\be
y\mapsto \tilde y_i=q^{-(\eta_i,\la)}y_iq^{-\eta_i}, \quad i=1,\ldots, n,
\label{tilde_y}
\ee
where $\eta_i\in \h$ are to be determined.
\begin{propn}
There exists a unique sequence $\eta_i\in \h$, $i=1,\ldots,n$, such that
$D_i=\tilde y_i\tp  \tilde x_i$ satisfy the quantum plane relations
and
\be
\langle \tilde x_n^{m_n}\tilde x_{n-1}^{m_{n-1}}\ldots
\tilde x_1^{m_1}v_{-\la},\tilde y_n^{m_n}\tilde y_{n-1}^{m_{n-1}}\ldots \tilde y_1^{m_1}v_{\la}\rangle
=(-1)^{|\mb|}
q^{-\psi(\mb)}\prod_{i=1}^n\hat m_i!\prod_{j=0}^{|\mb|-1}[\la-j]_q.
\ee
All other matrix coefficients are zero.
\label{mat_coeff1}
\end{propn}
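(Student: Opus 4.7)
The plan is to turn the two requirements on $\eta_j$, namely the quantum-plane relation $D_jD_i=q^2 D_iD_j$ and the preservation of the matrix coefficients, into linear equations in the pairings $(\eta_j,\mu_i)$ and $(\eta_j,\la)$, and then to exhibit the unique solution. The first preparation is to read off the commutation relations among the $y_i$ and among the $\tilde x_i$ from Proposition \ref{prop_rel1}. Writing $\mu_k=\al_1+\ldots+\al_k$, for $j<i$ one has $\mu_j\subset\mu_i$, $\mu_i-\mu_j\in R^+$ and $\mu_j<\mu_i-\mu_j$, so case (\ref{boundary1}) gives $e_{\mu_i}e_{\mu_j}=q^{-1}e_{\mu_j}e_{\mu_i}$. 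The involution $\omega$ together with Lemma \ref{tilde-antipod} translates this into analogous $q^{\pm 1}$-relations for $y_jy_i$ and $\tilde x_j\tilde x_i$ whose $q$-powers are inverse to one another, which is precisely the commutativity of the tensors $y_i\tp\tilde x_i$ recorded just before the proposition.

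Next I would apply the replacement (\ref{tilde_y}). The key identity is $q^{-\eta_j}y_i=q^{(\eta_j,\mu_i)}y_iq^{-\eta_j}$, which allows one to collect Cartan factors to the right:
$$
\tilde y_j\tilde y_i=q^{-(\eta_j+\eta_i,\la)+(\eta_j,\mu_i)}\,y_jy_i\,q^{-\eta_j-\eta_i},\qquad
\tilde y_i\tilde y_j=q^{-(\eta_j+\eta_i,\la)+(\eta_i,\mu_j)}\,y_iy_j\,q^{-\eta_j-\eta_i}.
$$
Combined with the previous step the $q$-powers produced by reordering $y_jy_i$ and $\tilde x_j\tilde x_i$ cancel, so the quantum-plane relation $D_jD_i=q^2 D_iD_j$ for $j<i$ reduces to the single linear condition
$$
(\eta_j,\mu_i)-(\eta_i,\mu_j)=2,\qquad j<i.
$$

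For the matrix-coefficient condition, the plan is to push every Cartan factor $q^{-\eta_k}$ in $\tilde y_n^{m_n}\cdots\tilde y_1^{m_1}v_\la$ all the way to the right onto $v_\la$, using the identity $(y_kq^{-\eta_k})^{m_k}=q^{\binom{m_k}{2}(\eta_k,\mu_k)}\,y_k^{m_k}q^{-m_k\eta_k}$ and the weights of the intermediate vectors. The outcome has the form $\tilde y_n^{m_n}\cdots\tilde y_1^{m_1}v_\la=q^{\Psi(\mb)}\,y_n^{m_n}\cdots y_1^{m_1}v_\la$ with
$$
\Psi(\mb)=\sum_{k=1}^{n}\Bigl[-2m_k(\eta_k,\la)+\tbinom{m_k}{2}(\eta_k,\mu_k)+m_k\sum_{i<k}m_i(\eta_k,\mu_i)\Bigr].
$$
Invariance of the matrix coefficient amounts to $\Psi(\mb)\equiv 0$ as a polynomial in $m_1,\ldots,m_n$; separating the coefficients of $m_k$, $m_k^2$ and $m_km_i$ yields
$$
(\eta_k,\mu_i)=0 \quad\text{for } i\le k,\qquad (\eta_k,\la)=0.
$$

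Combined with the quantum-plane equation, one gets $(\eta_j,\mu_i)=0$ for $i\le j$ and $(\eta_j,\mu_i)=2$ for $i>j$, together with $(\eta_j,\la)=0$. Using $\mu_i=\ve_1-\ve_{i+1}$ and $\la\propto\ve_1$, the values of $(\eta_j,\ve_k)$ on the whole orthogonal basis of $\h^*$ for $\g\l(n+1)$ are completely determined, producing a unique $\eta_j\in\h$. The main obstacle of the plan is the bookkeeping in the derivation of $\Psi(\mb)$: one must track the quadratic $\binom{m_k}{2}$ terms arising from the nontrivial commutator of $y_k$ with $q^{-\eta_k}$, as well as the bilinear cross terms produced as $q^{-m_k\eta_k}$ sweeps past the lower $y_i$ factors. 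Once $\Psi$ is in closed form, what remains is linear algebra.
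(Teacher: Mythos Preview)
Your approach is essentially the same as the paper's: translate the quantum-plane requirement and the invariance of the matrix coefficients into a linear system in the pairings $(\eta_j,\mu_i)$, and observe that this system has a unique solution. The paper does exactly this, writing $\bt_i=\mu_i$, displaying the Gram matrix $(\bt_i,\bt_j)$, and recording the same conditions $(\eta_i,\bt_j)=0$ for $i\geqslant j$ together with the off-diagonal quantum-plane constraint; it then notes that the scalar multiplier
\[
q^{\sum_{k}\binom{m_k}{2}(\eta_k,\bt_k)+\sum_{i>j}(\eta_i,\bt_j)m_im_j}
\]
in front of the transformed matrix coefficient vanishes precisely because of those vanishing conditions. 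Your derivation of $\Psi(\mb)$ is a more explicit version of the same computation.

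Two small points of divergence are worth flagging. First, the sign you obtain, $(\eta_j,\mu_i)-(\eta_i,\mu_j)=+2$, is opposite to the paper's $(\eta_j,\bt_i)=-2+(\eta_i,\bt_j)$; this is tied to the known sign issue in formula~(\ref{tilde_y}) recorded in the erratum, not to any flaw in your reasoning. Second, with the corrected prefactor $q^{+(\eta_i,\la)}$ the $\la$-dependent contributions cancel by design, so the paper never needs your extra condition $(\eta_k,\la)=0$ and can work entirely in the $n$-dimensional span of the $\bt_i$. You instead keep the uncorrected prefactor, derive $(\eta_k,\la)=0$ as an additional equation, and use it to pin down $\eta_k$ uniquely inside the $(n{+}1)$-dimensional Cartan of $\g\l(n+1)$. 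Both routes yield uniqueness; the paper's is slightly cleaner because the $\eta_i$ are then visibly independent of $\la$.
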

\begin{proof}
Introduce a new basis $\{\bt_i\}_{i=1}^n$ in $\h$ setting $\bt_i=\al_1+\ldots +\al_i$.
Note that the vectors $\tilde x_i$, $\tilde y_i$ carry weights $\pm\bt_i$.
The Gram matrix $(\bt_i,\bt_j)$ and its inverse are, respectively
$$
\left(
\begin{array}{rrrrrrr}
2 & 1&1&\ldots &1\\
1 & 2&1& \ldots &1\\
\ldots
\\
1 & 1&1& \ldots &2
\end{array}
\right)
,\quad
\left(
\begin{array}{rrrrrrr}
\frac{n}{n+1} & \frac{-1}{n+1}&\frac{-1}{n+1}&\ldots &\frac{-1}{n+1}\\
\frac{-1}{n+1} & \frac{n}{n+1}&\frac{-1}{n+1}& \ldots &\frac{-1}{n+1}\\
\ldots
\\
\frac{-1}{n+1} & \frac{-1}{n+1}&\frac{-1}{n+1}& \ldots &\frac{n}{n+1}
\end{array}
\right).
$$
Define $\eta_i=\sum_{k=1}^n B_{ik}\bt_k$ through the system of equations
\be
(\eta_j,\bt_i )=- 2+(\eta_i,\bt_j ),
\quad j<i
,\quad
 \quad (\eta_i,\bt_j)=0, \quad i\geqslant j.
 \label{cartan_factors}
\ee
The transition matrix is uniquely defined and equal to
$$
B
=
\left(\begin{array}{rrrrrrr}
0 & -2&-2&\ldots &-2\\
0 & 0&-2& \ldots &-2\\
&&\ldots
\\
0 & 0&0& \ldots &0
\end{array}
\right)
\left(
\begin{array}{rrrrrrr}
\frac{n}{n+1} & \frac{-1}{n+1}&\frac{-1}{n+1}&\ldots &\frac{-1}{n+1}\\
\frac{-1}{n+1} & \frac{n}{n+1}&\frac{-1}{n+1}& \ldots &\frac{-1}{n+1}\\
\ldots
\\
\frac{-1}{n+1} & \frac{-1}{n+1}&\frac{-1}{n+1}& \ldots &\frac{n}{n+1}
\end{array}
\right).
$$
Now we can complete the proof. Notice that the left group of equations (\ref{cartan_factors})
facilitates the quantum plane relations (\ref{q-plane}).
Further, it is clear that the matrix coefficients in the new basis involving $\tilde y_i$ are
proportional to the old ones. For the non-vanishing matrix coefficient (\ref{mat_coeff1}) is equal to
$$
q^{\sum_{k=1}^n\frac{m_k(m_k-1)}{2}(\eta_k,\bt_k)+\sum_{i>j}(\eta_i,\bt_j)m_im_j}
\langle \tilde x_n^{m_n}\tilde x_{n-1}^{m_{n-1}}\ldots
\tilde x_1^{m_1}v_{-\la}, y_n^{m_n}y_{n-1}^{m_{n-1}}\ldots y_1^{m_1}v_{\la}\rangle.
$$
The scalar multiplier disappears due to the left condition in (\ref{cartan_factors}). This completes the proof.
\end{proof}
\section{Star product on complex projective spaces}
In this section we apply the results of the preceding considerations to construction of $\U_\hbar\bigl(\s\l(n+1)\bigr)$-invariant star product on the homogeneous space
$GL(n+1)/GL(n)\times GL(1)$.
We start with the following well known fact.
\begin{lemma}
If  $D_1,\ldots, D_n$ satisfy the quantum plain relations (\ref{q-plane}), then
\be
(D_1+\ldots +D_n)^m=\sum_{m_1+\ldots + m_n=m} \frac{\hat m!}{\hat m_1!\ldots \hat m_n!}
D_n^{m_n}\ldots D_1^{m_1},
\label{q-binomial}
\ee
for all non-negative integers $m$.
\end{lemma}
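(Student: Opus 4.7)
The plan is to proceed by double induction: induction on $n$, with a $q$-Pascal induction on $m$ for the base case $n=2$.

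For $n=1$ the formula reads $D_1^m = D_1^m$, which is trivial. For the inductive step, write $D=D_1+\ldots+D_{n-1}$, so that $(D_1+\ldots+D_n)^m = (D+D_n)^m$. The quantum-plane relations $D_jD_i=q^2 D_iD_j$ for $j<i$ imply, for each $i<n$, that $D_iD_n=q^2D_nD_i$; summing in $i$ gives $D\cdot D_n = q^2 D_n\cdot D$. Thus the pair $(D,D_n)$ itself satisfies the two-variable quantum-plane relation.

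The heart of the argument is therefore the $n=2$ sub-lemma: if $XY=q^2 YX$, then
$$(X+Y)^m=\sum_{k=0}^m \frac{\hat{m}!}{\hat{k}!\,\hat{(m-k)}!}\, Y^k X^{m-k}.$$
I would prove this by induction on $m$. Multiply $(X+Y)^{m-1}$ on the left by $(X+Y)$, use the iterated relation $X\cdot Y^k=q^{2k}Y^k\cdot X$, and rearrange. Collecting coefficients of $Y^k X^{m-k}$ reduces to the $q$-Pascal identity
$$\frac{\hat{m}!}{\hat{k}!\,\hat{(m-k)}!}=\frac{\hat{(m-1)}!}{\hat{(k-1)}!\,\hat{(m-k)}!}+q^{2k}\,\frac{\hat{(m-1)}!}{\hat{k}!\,\hat{(m-1-k)}!},$$
which in turn rests on $\hat{m}=\hat{(m-1)}+q^{2(m-1)}$, directly from the definition $\hat m=\sum_{i=0}^{m-1}q^{2i}$.

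Applying this sub-lemma to $(D+D_n)^m$ and then substituting the inductive hypothesis for $D^{m-k}=(D_1+\ldots+D_{n-1})^{m-k}$, we get
$$(D+D_n)^m=\sum_{k=0}^m\frac{\hat{m}!}{\hat{k}!\,\hat{(m-k)}!}\,D_n^k\!\!\!\sum_{m_1+\ldots+m_{n-1}=m-k}\frac{\hat{(m-k)}!}{\hat{m_1}!\cdots\hat{m_{n-1}}!}\,D_{n-1}^{m_{n-1}}\cdots D_1^{m_1}.$$
Renaming $m_n=k$ makes the factors $\hat{(m-k)}!$ cancel, producing the desired multinomial sum. The only real obstacle is the careful bookkeeping in the $q$-Pascal step; everything else is routine.
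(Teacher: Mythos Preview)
Your proof is correct and follows exactly the approach the paper indicates: the paper's proof consists of the single sentence ``This lemma can be easily proved by induction on $n$,'' and your argument spells out precisely that induction, including the $n=2$ base case via the $q$-Pascal identity $\hat k + q^{2k}\hat{(m-k)}=\hat m$. There is nothing to add.
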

This lemma can be easily proved by induction on $n$.
Now we can construct the star product.
Define the tensor $\tilde \yb\tp \tilde \xb=\sum_{i=1}^n \tilde y_i\tp \tilde x_i$.
\begin{thm}
The element
\be
\label{mult}
\sum_{m=0}^\infty
\frac{(-1)^{m}q^{(\la+1)m-m^2}}{[m]_q!
\prod_{j=0}^{m-1}[\la-j]_q}
(\tilde \yb \tp \tilde \xb)^m,
\ee
is a lift of the inverse invariant form on $M_{-\la}\tp M_{\la}$.
\end{thm}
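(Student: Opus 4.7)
The plan is to verify the statement by direct computation: expand the proposed element as a sum over multi-indices $\mb \in \Z_+^n$, push the result down to $M^+_\la \otimes M^-_{-\la}$, and match the coefficients with those of the canonical element $S_{\C_\la, \C_{-\la}}$ reconstructed from the diagonal matrix coefficients obtained in Proposition \ref{mat_coeff1}.

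First, since the $D_i = \tilde y_i \otimes \tilde x_i$ satisfy the quantum plane relations (\ref{q-plane}), I invoke the q-binomial identity (\ref{q-binomial}) to expand
\[
(\tilde \yb \otimes \tilde \xb)^m \;=\; (D_1 + \cdots + D_n)^m \;=\; \sum_{|\mb|=m}\frac{\hat m!}{\hat m_1!\cdots\hat m_n!}\,D_n^{m_n}\cdots D_1^{m_1}.
\]
Because the two tensor factors of each $D_i$ commute past each other trivially, this rewrites as $\sum_{|\mb|=m}\frac{\hat m!}{\hat m_1!\cdots\hat m_n!}\,(\tilde y_n^{m_n}\cdots\tilde y_1^{m_1}) \otimes (\tilde x_n^{m_n}\cdots\tilde x_1^{m_1})$, an element of $\U_\hbar(\p^-)\otimes \U_\hbar(\p^+)$.

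Second, I substitute this expansion into (\ref{mult}) and apply the projection $\U_\hbar(\p^-)\otimes \U_\hbar(\p^+) \to M_\la^+\otimes M_{-\la}^-$ given by acting on the generators $v_\la$ and $v_{-\la}$. The image is
\[
\sum_{\mb \in \Z_+^n} \frac{(-1)^{|\mb|}\,q^{(\la+1)|\mb|-|\mb|^2}}{[\,|\mb|\,]_q!\,\prod_{j=0}^{|\mb|-1}[\la-j]_q}\cdot\frac{\hat{|\mb|}!}{\hat m_1!\cdots\hat m_n!}\,(\tilde y_n^{m_n}\cdots\tilde y_1^{m_1}v_\la)\otimes(\tilde x_n^{m_n}\cdots\tilde x_1^{m_1}v_{-\la}).
\]
Using $\hat k = q^{k-1}[k]_q$, the ratio $\hat{m}!/[m]_q!$ collapses to $q^{m(m-1)/2}$, and a short algebraic check confirms that the $q$-exponent simplifies to $\psi(\mb)=(\la+\tfrac12)|\mb|-\tfrac12|\mb|^2$.

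Third, I compare with the canonical element. By Proposition \ref{mat_coeff1}, the bases $\{\tilde y_n^{m_n}\cdots\tilde y_1^{m_1}v_\la\}$ of $M_\la^+$ and $\{\tilde x_n^{m_n}\cdots\tilde x_1^{m_1}v_{-\la}\}$ of $M_{-\la}^-$ are orthogonal with diagonal matrix coefficients $c_\mb = (-1)^{|\mb|}q^{-\psi(\mb)}\prod_i\hat m_i!\prod_{j=0}^{|\mb|-1}[\la-j]_q$. Hence $S_{\C_\la,\C_{-\la}}$ is precisely the sum of $c_\mb^{-1}$ times the tensor product of the corresponding basis vectors, and comparing coefficient by coefficient with the expression above yields exact equality.

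The main technical obstacle is bookkeeping of the $q$-powers coming from the three sources---the overall exponent $(\la+1)m-m^2$ in (\ref{mult}), the exponent produced when converting $\hat m!/[m]_q!$, and the exponent $-\psi(\mb)$ hidden inside $c_\mb$---but as shown above they combine exactly as needed. The fact that one is actually computing the inverse of the pairing (and not, say, its image under some further transformation) is automatic, because the diagonal form of the matrix coefficients reduces the inversion of the Shapovalov form to reciprocation of the scalars $c_\mb$.
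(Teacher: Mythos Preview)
Your proof is correct and follows exactly the same route as the paper: the paper's proof is the single sentence ``An immediate consequence of Proposition~\ref{mat_coeff1} and Lemma~\ref{q-binomial},'' and you have simply unpacked those two ingredients---the $q$-binomial expansion of $(\tilde\yb\otimes\tilde\xb)^m$ and the diagonal matrix coefficients of the pairing---together with the explicit $q$-power bookkeeping that collapses $(\la+1)m-m^2+\tfrac{m(m-1)}{2}$ to $\psi(\mb)$.
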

\begin{proof}
An immediate consequence of Proposition \ref{mat_coeff1} and Lemma \ref{q-binomial}.
\end{proof}
The  operator (\ref{mult}) is not quasiclassical modulo $\hbar$. To make it a star product deformation
of the ordinary multiplication in $\C[G]^\l$, we need to extend the ring of scalars by
Laurent series and consider the module $M^+_{\frac{\la}{\hbar}}$.
\begin{corollary}
For $f,g\in \C_\hbar[G]^\l$, the multiplication
$$f*_\hbar g:=\sum_{\mb \in \Z_+^n}
\frac{(-1)^{|\mb|}q^{(\la+\frac{1}{2})m-\frac{1}{2}m^2}}{\prod_{i=1}^n\hat m_i!
\prod_{j=0}^{|\mb|-1}[\frac{\la}{\hbar }-j]_q}
\bigl(\tilde y_n^{m_n}\ldots
\tilde y_1^{m_1} f\bigr)\cdot_\hbar\bigl(\tilde x_n^{m_n}\ldots \tilde x_1^{m_1}g\bigr),
$$
is a $\U_\hbar\bigl(\g\l(n+1)\bigr)$-invariant star product on $\C_\hbar[G]^\l$
 under the right
co-regular  action.
\end{corollary}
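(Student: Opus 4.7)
The plan is to recognise $*_\hbar$ as the composition $\cdot_\hbar\circ S^\la$, where $S^\la$ is the scalar-case dynamical twist from the Proposition comparing $F^\la$ and $S^\la$, and to derive both associativity and invariance directly from that abstract description. Since $\C_\hbar[G]^\l$ consists of $\U_\hbar(\l)$-invariants under the left co-regular action, for any $f,g\in\C_\hbar[G]^\l$ the tensor $f\tp g$ lies in $V^\l\tp W^\l$ with $V=W=\C_\hbar[G]$ regarded as a $\U_\hbar(\g)$-module under the right co-regular action; so the hypothesis of that Proposition is satisfied and $F^\la$ may be replaced by $S^\la$ on these arguments.

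Next I would match the explicit series in the Corollary with the lift supplied by the preceding Theorem. Expanding $(\tilde\yb\tp\tilde\xb)^m$ via the quantum binomial formula (\ref{q-binomial}) and using $\hat m!=q^{\binom{m}{2}}[m]_q!$, $\hat m_i!=q^{\binom{m_i}{2}}[m_i]_q!$ to absorb the quantum binomial coefficient into the overall $q$-exponent turns the Theorem's formula into
$$
S^\la=\sum_{\mb\in\Z_+^n}\frac{(-1)^{|\mb|}\,q^{(\la+\frac{1}{2})|\mb|-\frac{1}{2}|\mb|^2}}{\prod_{i}\hat m_i!\,\prod_{j=0}^{|\mb|-1}[\la-j]_q}\;\tilde y_n^{m_n}\cdots\tilde y_1^{m_1}\tp\tilde x_n^{m_n}\cdots\tilde x_1^{m_1},
$$
which, after substituting $\la\mapsto\la/\hbar$ and composing with $\cdot_\hbar$ on the right co-regular action, reproduces the expression for $f*_\hbar g$ verbatim.

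The $\U_\hbar(\g)$-invariance is then transparent: $S^\la$ is the canonical element of the $\U_\hbar(\g)$-invariant pairing between $M^-_{-\la}$ and $M^+_\la$, hence itself $\U_\hbar(\g)$-invariant, and $\cdot_\hbar$ is $\U_\hbar(\g)$-covariant under the right co-regular action. Associativity is inherited from the cocycle identity satisfied by the dynamical twist $F^\la$ built via the DAF construction of Section 3, to which $*_\hbar$ specialises on invariants.

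The main obstacle is ensuring that $*_\hbar$ is a genuine formal star product rather than a mere bilinear-operator-valued expression; this is precisely what forces the rescaling $\la\mapsto\la/\hbar$ flagged immediately before the Corollary. Under this substitution the factors $[\la/\hbar-j]_q^{-1}$ carry a definite negative $\hbar$-valuation, while each ordered monomial $\tilde y_n^{m_n}\cdots\tilde y_1^{m_1}\tp\tilde x_n^{m_n}\cdots\tilde x_1^{m_1}$ acts on $\C_\hbar[G]^\l\tp\C_\hbar[G]^\l$ by bidifferential operators whose leading $\hbar$-order equals $|\mb|$. Verifying that these two $\hbar$-orders combine so that the $\mb$-term contributes exactly at order $\hbar^{|\mb|}$, yielding $f*_\hbar g=f\cdot_\hbar g+O(\hbar)$ and $\hbar$-adic convergence, is the delicate bookkeeping step; everything else then falls out of the general DAF framework.
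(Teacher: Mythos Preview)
Your approach is essentially the paper's own: the Corollary is stated without proof as an immediate consequence of the preceding Theorem (expanding $(\tilde\yb\tp\tilde\xb)^m$ via the $q$-binomial identity and substituting $\la\mapsto\la/\hbar$), with associativity and $\U_\hbar(\g)$-invariance inherited from the general DAF/dynamical-twist machinery of \cite{DM} recalled in Section~3. Your $q$-exponent bookkeeping is correct, and your identification of the $\hbar$-adic convergence issue is exactly the point the paper flags in the sentence preceding the Corollary.

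One small imprecision: you write that $S^\la$ ``is the canonical element \ldots\ hence itself $\U_\hbar(\g)$-invariant''. Strictly, $S^\la$ is only a \emph{lift} of the canonical element to $\U_\hbar(\p^-)\tp\U_\hbar(\p^+)$, and such a lift need not be invariant as an element of $\U_\hbar(\g)^{\tp 2}$. The $\U_\hbar(\g)$-invariance of $*_\hbar$ under the right co-regular action is not read off directly from $S^\la$ but is a structural consequence of the DAF construction of \cite{DM} (the reduction of the dynamical twist to $\l$-invariants produces a $\U_\hbar(\g)$-equivariant multiplication on $\C_\hbar[G]^\l$). You do invoke the DAF framework elsewhere, so this does not damage the argument, but the sentence as written overstates what is ``transparent''.
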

Remark that the star product given by this formula involves the star product in $\C_\hbar[G]$,
whose explicit expression through the classical multiplication in $\C[G]^\l$
 is unknown. Therefore it cannot be regarded as perfectly explicit.

Let us reserve the same notation  for the classical limits of the root vectors $x_i$, $y_i$.
Recall that $x_i$ and $\tilde x_i$ have the same classical limits.
\begin{corollary}
For $f,g\in \C[G]^\l$, the multiplication
\be
f*_t g:=\sum_{\mb \in \Z_+^n}
\frac{(-t)^{|\mb|}}{\prod_{i=1}^nm_i!
\prod_{j=0}^{|\mb|-1}(\la-jt)}
\bigl( x_n^{m_n} \ldots
 x_1^{m_1} f\bigr)\cdot\bigl(y_n^{m_n}\ldots y_1^{m_1}g\bigr),
\label{star_P^n}
\ee
is a $\U\bigl(\g\l(n+1)\bigr)$-invariant star-product on $\C[G]^\l$
 under the right
co-regular  action.
\end{corollary}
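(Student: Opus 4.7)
The approach is to take the classical $\hbar \to 0$ limit of the preceding corollary, with the deformation parameter $t$ playing the role of $\hbar$ --- or equivalently, to re-run the entire construction in the undeformed setting of $\U(\g)$. Since all the ingredients (root vectors, the canonical Shapovalov element, and the multinomial expansion of Lemma~\ref{q-binomial}) admit direct classical counterparts, the argument reduces to careful bookkeeping of the limit.

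First I would establish the classical analog of Proposition~\ref{mat_coeff1}: the non-vanishing matrix coefficients of the undeformed invariant pairing on $M^+_\la \otimes M^-_{-\la}$ are
\[
\langle x_n^{m_n}\cdots x_1^{m_1} v_{-\la},\; y_n^{m_n}\cdots y_1^{m_1} v_\la\rangle
= (-1)^{|\mb|}\,\prod_{i=1}^n m_i! \prod_{j=0}^{|\mb|-1}(\la - j),
\]
which arises as the $\hbar \to 0$ limit of (\ref{mat_coef}) (using $\hat m_i! \to m_i!$, $q^{-\psi(\mb)} \to 1$, and the fact already noted in the paper that $\tilde x_i, \tilde y_i$ degenerate to $x_i, y_i$), and can alternatively be verified by repeating the commutation argument of the earlier proposition with classical root vectors. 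Next, since the quantum-plane relations (\ref{q-plane}) classically collapse to genuine commutativity of the tensors $y_i \otimes x_i$, Lemma~\ref{q-binomial} reduces to the ordinary multinomial theorem, and the classical counterpart of the preceding Theorem furnishes the lift of the inverse form as
\[
\sum_{\mb \in \Z_+^n} \frac{(-1)^{|\mb|}}{\prod_i m_i! \prod_{j=0}^{|\mb|-1}(\la - j)}\; (y_n^{m_n}\cdots y_1^{m_1}) \otimes (x_n^{m_n}\cdots x_1^{m_1}).
\]

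Finally, to introduce the deformation parameter $t$ I would rescale $\la \mapsto \la/t$, which is the classical analog of the passage to $M^+_{\la/\hbar}$ used in the quantum case. The denominator factor $\prod_j (\la/t - j) = t^{-|\mb|}\prod_j(\la - jt)$ then clears to produce the overall $(-t)^{|\mb|}$ in the numerator after absorbing signs. Invariance under the right co-regular action of $\U\bigl(\g\l(n+1)\bigr)$ is inherited from the $\U(\g)$-invariance of the canonical Shapovalov element, and the star-product property (degeneration to ordinary multiplication at $t = 0$) is immediate since only the $\mb = 0$ term survives. The main subtle point is tracking how the overall exponential factor $q^{(\la+\frac12)m-\frac12 m^2}$ in (\ref{mult}) dissolves into trivialities in the classical limit once $\la$ is replaced by $\la/\hbar$; with that reconciliation done, the claim follows immediately.
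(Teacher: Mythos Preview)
Your proposal is correct and follows exactly the approach the paper takes: the paper's own proof is the single sentence ``This multiplication is obtained from (\ref{mult}) in two steps: taking limit $\hbar \to 0$ and subsequent replacement of $\la$ by $\frac{\la}{t}$,'' and you have simply spelled out those two steps in detail. One small remark: your lift of the inverse form places the $y$'s on the left tensor factor and the $x$'s on the right, whereas the statement has them the other way round when acting on $f$ and $g$; this is harmless here precisely because the classical multiplication in $\C[G]$ is commutative (the paper's erratum makes the same observation), but it is worth noting explicitly.
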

This multiplication is obtained from (\ref{mult}) in two steps: taking limit $\hbar \to 0$
and subsequent replacement of $\la$ by $\frac{\la}{t}$.

In classical universal enveloping algebra setting
the (scalar reduced) dynamical twist $F^{\la}$ takes the form
\be
F^\la=\sum_{m=0}^\infty \frac{(-t)^{m}}{m!
\prod_{j=0}^{m-1}(\la-jt)}
(\xb\tp \yb)^m.
\label{star_P^n_1}
\ee
The tensor $\xb\tp \yb=\sum_{i=1}^n x_i\tp y_i$ is the $\U_\hbar(\l)$-invariant
element of $\n^+_\l\tp \n^-_\l$.
\section{Comparison with earlier results}
In the present section we compare the star-product on $GL(n+1)/GL(n)\times GL(1)$
with that on the complexified projective space $\C P^{n}$ regarded as a real
manifold. This star product was obtained in \cite{BBEW} by completely different methods.
In both cases they form a one parameter family. In our setting, it corresponds to the
highest weight of the module $M_\la$, while in \cite{BBEW} to the radius of
 $\C P^{n}$.
We prove that the two star products  coincide up to a shift of this parameter.

Let us rewrite the star product (\ref{star_P^n})  in local coordinates.
Introduce a parametrization of a neighborhood of the identity in $GL(n+1)$:
$$
(\ztb,\omb,h)\mapsto
\left(
\begin{array}{ccc}
1&0 \\
\omb&1_n
\end{array}
\right)
\left(
\begin{array}{ccc}
1&\ztb \\
0&1_n
\end{array}
\right)
\left(
\begin{array}{ccc}
a&0 \\
0&A
\end{array}
\right),
$$
where $\ztb$ and $\omb$ are, respectively, $n$-dimensional row and column,
$1_n\in GL(n)$ is the unit matrix, and
$h=\left(
\begin{array}{ccc}
a&0 \\
0&A
\end{array}
\right)\in H=GL(1)\times GL(n).
$
\newcommand{\fb}{\boldsymbol{f}}
\newcommand{\phib}{\boldsymbol{\phi}}
A function $\varphi\in \C[G]$ is $H$-invariant if and only if it is independent of $h$ in these local
coordinates, $\varphi(\ztb,\omb,h)=\varphi(\ztb,\omb,1_{n+1})$. Then we shall write simply $\varphi(\ztb,\omb)$.

It is obvious that the left-invariant vector field $x_i$ is represented by  the partial derivative $\frac{\partial}{\partial  {\zt_i}}$.
To evaluate the left-invariant vector field $y_i$ at the point $(\ztb,\omb,1)$,
consider the the right shift of $(\ztb,\omb,1)$ by $e^{t\omb_1}$, which in the local  chart reads
$$
\left(
\begin{array}{ccc}
1&0 \\
\omb+\frac{\omb_1 t}{1+(\ztb,\omb_1)t}&1_n
\end{array}
\right)
\left(
\begin{array}{ccc}
1&\ztb(1+(\ztb,\omb_1)t) \\
0&1_n
\end{array}
\right)
\left(
\begin{array}{ccc}
1+(\ztb,\omb_1)t& 0\\
0&\frac{1}{1+(\ztb,\omb_1)t}1_n
\end{array}
\right).
$$
Assuming  $\varphi$ an $H$-invariant function, we find $(e^{t\omb_1 }\varphi)(\ztb,\omb)=
\varphi\bigl(\ztb+\ztb(\ztb,\omb_1)t, \omb+\frac{\omb_1 t}{1+(\ztb,\omb_1)t}\bigr)$, hence
\be
 y_i \varphi(\ztb,\omb)&=&\frac{\partial}{\partial {\om_i}} \varphi(\ztb,\omb)+\zt_i\sum_{k=1}^n \zt_k\frac{\partial}{\partial{\zt_k}} \varphi(\ztb,\omb),
\nn\\
x_i \varphi(\ztb,\omb)&=&\frac{\partial}{\partial{\zt_i}} \varphi(\ztb,\omb).\nn
\ee

A version of star-product on $\C P^n$ (regarded as a real manifold) was constructed in \cite{BBEW} as a homogeneous (delation-invariant) star product on $V=\C^{n+1}$. To compare it
with our result, consider its complexified version on $V\oplus V^*$:
\be
\phi*\psi=\phi\psi+\sum_{r=1}^\infty \Bigl(-\frac{t}{2\mu}\Bigr)^k\sum_{s=1}^r\sum_{k=1}^s\frac{(-1)^{r-k}k^{r-1}}{s!(s-k)!(k-1)!}
(z,w)^s\cdot \bigl(\frac{\partial}{\partial z}\tp \frac{\partial}{\partial w}\bigr)^s(\phi\tp \psi),
\label{bordemann}
\ee
where $\frac{\partial}{\partial z}\tp \frac{\partial}{\partial w}=\sum_{i=1}^{n+1}\frac{\partial}{\partial z_i}\tp \frac{\partial}{\partial w_i}$, and the dot means the classical multiplication.

The vector space $V\oplus V^*$ carries the natural representation of $GL(n+1)$,
which is extended by the group of delations $GL(1)$. The
multiplication (\ref{bordemann}) is invariant with respect the direct product $GL(n+1)\times GL(1)$.
In particular,  it restricts to homogeneous
functions of zero degree, regarded as functions on the projective space $\C P^{2n+1}$.

Let $\{e_i\}\subset V$ be the standard basis  and $\{e^i\}\subset V^*$ be its dual.
Consider the $GL(n+1)$-orbit $O$ passing through $o=e_1\oplus e^1$. The isotropy subgroup of this point
is $1\times GL(n)$, so this orbit is isomorphic to the coset space $GL(n+1)/1\times GL(n)$.
We extend the above parametrization of $GL(n+1)/GL(1)\times GL(n)$ to a
parametrization of $O$:
$$
(\ztb,\omb,a)\mapsto
\left(
\begin{array}{ccc}
1&0 \\
\omb&1
\end{array}
\right)
\left(
\begin{array}{ccc}
1&\ztb \\
0&1
\end{array}
\right)
\left(
\begin{array}{ccc}
a&0 \\
0&a^{-1}1_n
\end{array}
\right)o
=(a,a\omb)\oplus (a^{-1}+a^{-1}(\omb,\ztb),-a^{-1}\ztb).
$$
It can  be extended to a  local parametrization
$$
a=z_0,\quad
b=(\zb,\wb),\quad
\zt_i=-z_0 w_i,  \quad
\om_i=\frac{z_i}{z_0}, \quad i=1,\ldots,n.
$$
of $V\oplus V^*$ near $o$.
In this chart, the basic vector fields are represented as follows:
$$
\frac{\partial}{\partial z_0}\mapsto
\frac{\partial}{\partial a}+\frac{1}{a}\bigl(b+(\ztb,\omb)\bigr)\frac{\partial}{\partial b}
+\frac{1}{a}\sum_{i=1}^n (\zt_i\frac{\partial}{\partial \zt_i}-\om_i\frac{\partial}{\partial \om_i}),
\quad \frac{\partial}{\partial w_0}\mapsto a\frac{\partial}{\partial b},
$$
$$
\frac{\partial}{\partial z_i}\mapsto -\frac{\zt_i}{a}\frac{\partial}{\partial b}+\frac{1}{a}\frac{\partial}{\partial \om_i},
\quad
\frac{\partial}{\partial w_i}\mapsto \om_i a\frac{\partial}{\partial b}+a\frac{\partial}{\partial \zt_i}.
$$
In the new coordinates, a function $\phi$ on $V\oplus V^*$ is homogeneous if and only if
$$
\phi(\la a,\la^2 b, \la^2 \ztb, \omb)=\phi( a,b,\ztb, \omb).
$$
It is $GL(1)\times 1_n$-invariant if it is independent of $a$.
The correspondence between functions on $GL(n+1)/GL(1)\times GL(n)$ and homogeneous
$GL(1)\times 1_n$-invariant functions on $V\oplus V^*$ is
$\phi(\ztb, \omb)\mapsto \phi(b^{-1}\ztb, \omb)$, with the reverse correspondence being the specialization
at $b=1$.
We rewrite the action of the basic vector fields on such functions to find
$$
\frac{\partial}{\partial z_0}\mapsto
-\sum_{i=1}^n\bigr((\ztb,\omb)\zt_i\frac{\partial}{\partial \zt_i}+\om_i\frac{\partial}{\partial \om_i}\bigl),
\quad \frac{\partial}{\partial w_0}\mapsto \sum_{i=1}^n\zt_i\frac{\partial}{\partial \zt_i},
$$
$$
\frac{\partial}{\partial z_i}\mapsto \zt_i \sum_{k=1}^n\zt_k\frac{\partial}{\partial \zt_k}+\frac{\partial}{\partial \om_i},
\quad
\frac{\partial}{\partial w_i}\mapsto \om_i  \sum_{i=1}^n\zt_i\frac{\partial}{\partial \zt_i}+\frac{\partial}{\partial \zt_i}.
$$
Now it is easy to check the equality
$$
\sum_{i=1}^n\frac{\partial}{\partial z_i}\tp \frac{\partial}{\partial w_i}
=
\sum_{i=1}^n\frac{\partial}{\partial \om_i}\tp \frac{\partial}{\partial \zt_i}
+
\sum_{i,j=1}^n \zt_i\zt_j\frac{\partial}{\partial \zt_i}\tp \frac{\partial}{\partial \zt_j}
=\xb\tp \yb.
$$
The bidifferential operator from (\ref{bordemann}), when restricted to $(GL(1)\times 1_n)\times GL(1)$-invariant  functions, reads
\be
\id\tp \id +\sum_{r=1}^\infty \Bigl(-\frac{t}{2\mu}\Bigr)^r\sum_{s=1}^r\sum_{k=1}^s\frac{(-1)^{r-k}k^{r-1}}{s!(s-k)!(k-1)!}(\xb\tp \yb)^s.
\label{bordemann1}
\ee
 It is therefore a series in the same operator
$\xb\tp \yb$ as  (\ref{star_P^n_1}). This reduces comparison of the two star products to a comparison
of  power series in two variables, $t$ and $\xb\tp \yb$.
\begin{propn}
Operators (\ref{star_P^n_1}) and (\ref{bordemann1}) coincide upon the
identification $2\mu-t = \la$.
\end{propn}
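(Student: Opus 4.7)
Both operators (\ref{star_P^n_1}) and (\ref{bordemann1}) are formal power series in the single commuting expression $X := \xb\tp\yb$, with scalar coefficients depending on $t$ and either $\la$ or $\mu$. The plan is to match the two series coefficient by coefficient in $X^s$. Denote by $A_s$ the coefficient of $X^s$ in (\ref{star_P^n_1}). Under the substitution $\la = 2\mu - t$, the reindexing $j \mapsto j-1$ in the denominator product yields
\[
A_s \;=\; \frac{(-t)^s}{s!\prod_{j=0}^{s-1}(\la - jt)} \;=\; \frac{(-t)^s}{s!\prod_{j=1}^{s}(2\mu - jt)}.
\]
The task is to bring the coefficient $B_s$ of $X^s$ in (\ref{bordemann1}) to this same closed form.

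To compute $B_s$, I would swap the outer $r$-sum with the finite $s$-sum (so that for each fixed $s$ the parameter $r$ runs from $s$ to $\infty$), collapse signs via $(-t/(2\mu))^r(-1)^{r-k}=(-1)^k (t/(2\mu))^r$, absorb a factor of $1/k$ into $k!$, and sum the resulting geometric series in $t$. Treating $t$ formally, the intermediate expression
\[
B_s \;=\; \frac{1}{s!}\sum_{k=1}^{s}\frac{(-1)^k}{k!(s-k)!}\sum_{r=s}^{\infty}\left(\frac{kt}{2\mu}\right)^r
\]
reduces to
\[
B_s \;=\; \frac{t^s}{s!(2\mu)^{s-1}}\sum_{k=1}^{s}\frac{(-1)^k k^{s-1}}{(k-1)!(s-k)!(2\mu - kt)}.
\]

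The decisive step is then to recognize the remaining $k$-sum as the partial fraction decomposition, in the variable $u$, of the rational function $u^{s-1}/\prod_{j=1}^{s}(u-jt)$ at its simple poles $u=kt$. A direct residue computation gives
\[
\frac{(-1)^{s} u^{s-1}}{\prod_{j=1}^{s}(u-jt)} \;=\; \sum_{k=1}^{s}\frac{(-1)^{k} k^{s-1}}{(k-1)!(s-k)!(u - kt)},
\]
since the residue at $u=kt$ equals $(-1)^{s-k}k^{s-1}/\bigl((k-1)!(s-k)!\bigr)$ and $(-1)^{s-k}(-1)^s = (-1)^{k}$. Specializing $u = 2\mu$ in this identity collapses $B_s$ to $(-t)^s/\bigl[s!\prod_{j=1}^s (2\mu - jt)\bigr] = A_s$, which establishes the equality. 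I expect no genuine obstacle, only careful bookkeeping of indices and signs during the sum swap; interpreted as a formal series in $t$ the geometric-series manipulation is harmless, and the partial fraction identity is elementary.
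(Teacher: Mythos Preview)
Your proof is correct and follows essentially the same strategy as the paper: swap the order of summation so that the power of $\xb\tp\yb$ is outermost, then collapse the remaining sums via a partial-fraction identity to obtain the closed product $\prod_{j=1}^{s}(2\mu-jt)^{-1}$. The only cosmetic difference is the order of the two inner steps: the paper first rewrites $\sum_{k=1}^{s}\frac{(-1)^{s-k}k^{r-1}}{(k-1)!(s-k)!}$ as the complete homogeneous symmetric polynomial $h_{r-s}(1,\ldots,s)$ (using the identity $\sum_{k_1+\cdots+k_s=N}a_1^{k_1}\cdots a_s^{k_s}=\sum_i a_i^{N+s-1}/\prod_{j\ne i}(a_i-a_j)$, itself a partial-fraction statement) and then sums the generating series $\sum_N h_N(\theta,\ldots,s\theta)=\prod_i(1-i\theta)^{-1}$, whereas you sum the geometric series in $r$ first and apply partial fractions to $u^{s-1}/\prod_{j=1}^s(u-jt)$ at $u=2\mu$ afterwards. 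Both routes are the same identity read in opposite directions; your version is marginally more direct since it avoids naming the symmetric-function intermediary.
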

\begin{proof}
The proof is based on the following formula
$$
\sum_{k_1+\ldots+ k_{m}=k} a_1^{k_1}a_2^{k_2}\ldots  a_{m}^{k_{m}}
=\sum_{i=1}^{m} \frac{a_i^{k+m-1}}{\prod_{j\not =i}{(a_i-a_j)}},
$$
which holds true for any communing variables $a_i$.
Applying this formula for $a_i=i$, we get
$$
\sum_{k_1+\ldots+ k_{m}=r-m} 1^{k_1}2^{k_2}\ldots  {m}^{k_{m}}
=\sum_{k=1}^{m} \frac{k^{r-1}}{\prod_{j<k}{(a_k-a_j)}\prod_{j>k}{(a_k-a_j)}}
=\sum_{k=1}^{m} \frac{(-1)^{m-k}k^{r-1}}{(k-1)!(m-k)!}.
$$
Put $\theta=\frac{t}{2\mu}$ and rearrange the series in (\ref{bordemann1})  as
$$
\id\tp \id+\sum_{r=1}^\infty \sum_{m=1}^r\Bigr(\sum_{k_1+\ldots+ k_{m}=r-m} (1\theta)^{k_1}(2\theta)^{k_2}\ldots  (m\theta)^{k_{m}}\Bigl)
\frac{(-\theta)^{m}(\xb\tp \yb)^{m}}{m!},
\nn
$$
and further as
$$
\id\tp \id+\sum_{m=1}^\infty  \Bigr(\sum_{r=m}^\infty \sum_{k_1+\ldots + k_{m}=r-m} (1\theta)^{k_1}(2\theta)^{k_2}\ldots  (m\theta)^{k_{m}}\Bigl)
\frac{(-\theta)^{m}(\xb\tp \yb)^{m}}{m!}.
\nn
$$
The summation $ \sum_{r=m}^\infty\sum_{k_1+\ldots + k_{m}=r-m} $
is rearranged to summation $ \sum_{k_1,\ldots, k_{m}=0}^\infty  $.
It contracts the sum in the brackets to
$
\frac{1}{(1-\theta)\ldots (1-m\theta)}.$
This immediately implies the statement.
\end{proof}

\section*{Appendix}
Below we collect some useful auxiliary algebraic  material about the properties
of "commutator" $[x,y]_a=xy- a yx$ defined in any associative algebra for some scalar $a$.
Next is a sort of "Jacobi identity" for such quasi-commutators.
\begin{lemma}
For any three elements $x,y,z$ of an associative algebra and any three scalars $a,b,c$
\be
[x,[y,z]_a]_b=[[x,y]_c,z]_{\frac{ab}{c}}+c [y,[x,z]_{\frac{b}{c}}]_{\frac{a}{c}}.
\label{Jacobi}
\ee
\end{lemma}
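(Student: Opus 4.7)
The plan is to establish the identity by direct expansion of both sides using only the definition $[x,y]_a = xy - a yx$, and to verify that all terms match after cancellation. There is no cleverness needed; the lemma is a purely algebraic computation, analogous to how the ordinary Jacobi identity is proved by expanding $[x,[y,z]]$.

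First I would expand the left-hand side:
\begin{equation*}
[x,[y,z]_a]_b = x(yz - azy) - b(yz-azy)x = xyz - a\,xzy - b\,yzx + ab\,zyx.
\end{equation*}
Next I would expand the two terms on the right-hand side separately. The first term gives
\begin{equation*}
[[x,y]_c,z]_{ab/c} = (xy - c yx)z - \tfrac{ab}{c}\, z(xy - c yx) = xyz - c\,yxz - \tfrac{ab}{c}\,zxy + ab\,zyx,
\end{equation*}
while the second gives
\begin{equation*}
c[y,[x,z]_{b/c}]_{a/c} = c\,y(xz - \tfrac{b}{c}zx) - a\,(xz - \tfrac{b}{c}zx)y = c\,yxz - b\,yzx - a\,xzy + \tfrac{ab}{c}\,zxy.
\end{equation*}

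The final step is simply to add the two right-hand side expressions and observe that the $c\,yxz$ terms cancel, as do the $\tfrac{ab}{c}\,zxy$ terms, leaving exactly $xyz - a\,xzy - b\,yzx + ab\,zyx$, which matches the left-hand side. I do not anticipate any real obstacle: the content of the lemma is the precise balancing of scalar coefficients in the identity, and the main thing to get right is the bookkeeping, i.e.\ that the cross-coefficients $c$ and $ab/c$ on the second term of one bracket cancel against the matching terms in the other, and that the factor $c$ in front of the second right-hand bracket is exactly what is needed to restore this cancellation. No properties of the algebra beyond associativity are used, and no further structure on the scalars is needed apart from the invertibility of $c$.
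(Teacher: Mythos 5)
Your expansion is correct: the left-hand side and the sum of the two right-hand terms both equal $xyz - a\,xzy - b\,yzx + ab\,zyx$, with the $c\,yxz$ and $\tfrac{ab}{c}\,zxy$ cross-terms cancelling exactly as you say. The paper omits the proof as "elementary," and direct expansion is precisely the intended argument, so your approach matches.
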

\noindent
The proof of this statement is elementary. Next state a useful fact, which is a sort of  Serre relation
for "adjacent root vectors" of higher weights.
\begin{lemma}
\label{Great Auxiliary}
Suppose  some elements $y,z,x$ of an associative algebra satisfy the identities
\be
[y,[y,z]_b]_{b^{-1}}=0,\quad [x,[x,y]_a]_{a^{-1}}=0, \quad
[x,z]=0.
\label{auxiliary}
\ee
for some invertible scalars $b,a$.
Then
$
[[x,y]_{a},[[x,y]_{a},z]_{b}]_{b^{-1}}=0.
$
\end{lemma}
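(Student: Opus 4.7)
The plan is to proceed by iterated application of the quasi-Jacobi identity (\ref{Jacobi}) of the preceding lemma, combined with the three given hypotheses. For brevity, set $w = [x,y]_a$ and $u = [y,z]_b$.

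First, I would apply (\ref{Jacobi}) with $c = a$ to $[x, [y, z]_b]_a$. The hypothesis $[x, z] = 0$ kills the right-hand term (since $[x, z]_1 = 0$), giving
$$[w, z]_b = [x, u]_a.$$
Second, the hypothesis $[x, [x,y]_a]_{a^{-1}} = 0$ unfolds to $wx = a\, xw$, equivalently $[w, x]_a = 0$. Applying (\ref{Jacobi}) to $[w, [x, u]_a]_{b^{-1}}$ with $c = a$ yields
$$[w, [w,z]_b]_{b^{-1}} = [[w,x]_a, u]_{1/b} + a\,[x, [w, u]_{1/(ab)}] = a\,[x, [w, u]_{1/(ab)}],$$
so it suffices to show that $x$ commutes with $[w, u]_{1/(ab)}$.

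Third, using $[y, u]_{b^{-1}} = 0$, I would apply (\ref{Jacobi}) with $c = a$ to the identity $0 = [x, [y, u]_{b^{-1}}]_1$, obtaining
$$[w, u]_{1/(ab)} = -a\,[y, [x, u]_{1/a}]_{1/(ab)}.$$
A further application of (\ref{Jacobi}) with $c = 1/a$ to $[x, u]_{1/a} = [x, [y, z]_b]_{1/a}$ (again using $[x, z] = 0$) gives $[x, u]_{1/a} = [w'', z]_b$, where $w'' = [x, y]_{1/a}$. Since the relation $[x, [x,y]_a]_{a^{-1}} = 0$ is invariant under $a \leftrightarrow a^{-1}$, one also has $[x, w'']_a = 0$, i.e.\ $w''x = a^{-1}xw''$.

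It remains to verify by direct computation that $x$ commutes with $-a\,[y, [w'', z]_b]_{1/(ab)}$, where both $w$ and $w''$ $q$-commute with $x$ (with reciprocal scalars $a^{\pm 1}$) and $u$ is annihilated by $\mathrm{ad}_y$ in the sense $[y,u]_{b^{-1}} = 0$. I expect the main obstacle to be precisely this final verification: one has to juggle several scalars arising from the two different $q$-commutations of $x$ with $w$ and $w''$, while simultaneously invoking both Serre-type identities and the commutativity $xz = zx$ to collapse the residual terms.
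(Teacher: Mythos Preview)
Your first two reductions are correct: with $w=[x,y]_a$ and $u=[y,z]_b$ you legitimately obtain $[w,z]_b=[x,u]_a$ from $[x,z]=0$, and then $[w,[w,z]_b]_{b^{-1}}=a\,[x,[w,u]_{1/(ab)}]$ from $[w,x]_a=0$. The rewriting in step~3 is also valid. The gap is in what you leave undone: the ``direct computation'' you postpone is not a mop-up step but the actual content of the lemma, and the Jacobi machinery alone will not close it. Concretely, if you continue in the same style---use $[x,p]_a=0$ with $p=[x,u]_{1/a}=[w'',z]_b$ and apply (\ref{Jacobi}) to $[x,[y,p]_{1/(ab)}]_1$ with $c=a^{-1}$---you arrive at
\[
[w,[w,z]_b]_{b^{-1}}\;=\;-a^{2}\,[w'',[w'',z]_b]_{b^{-1}}.
\]
Since the hypotheses are symmetric under $a\leftrightarrow a^{-1}$, redoing the whole chain with $a$ replaced by $a^{-1}$ returns exactly the same relation; no second, independent identity drops out. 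In other words, iterated use of (\ref{Jacobi}) together with the three hypotheses keeps rewriting the target as a scalar multiple of its $a\leftrightarrow a^{-1}$ twin, and this loop never terminates in zero. To finish you would have to expand in monomials and invoke the two Serre-type relations simultaneously---which is precisely the computation the paper performs from the outset.

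The paper's argument is quite different and entirely elementary. Writing $A=b+b^{-1}$, $B=a+a^{-1}$, it multiplies the relation $y^2z - A\,yzy + zy^2 = 0$ by $x^2$ on the left and, separately, on the right; it then uses $x^2y=B\,xyx-yx^2$ and $xz=zx$ to push the $x$'s through, obtaining two six-term identities in the monomials $(xy),(yx),z$. Adding the first to $a^{2}$ times the second collapses to $B\cdot[[x,y]_a,[[x,y]_a,z]_b]_{b^{-1}}=0$. No Jacobi identity is used; the whole thing is a two-line linear combination. So your structural approach is more conceptual in spirit, but it stalls at exactly the point where the paper's brute-force combination does the work.
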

\begin{proof}
Put $A=(b+b^{-1})$ and  $B=(a+a^{-1})$. The relations (\ref{auxiliary}) imply the equalities
\be
0&=&B(xy)^2z-(yx)(xy)z-AB(xy)z(xy)+A(yx)z(xy)+Bz(xy)^2-z(yx)(xy),
\nn\\
0&=&B(yx)^2z-(yx)(xy)z-AB(yx)z(yx)+A(yx)z(xy)+Bz(yx)^2-z(yx)(xy).
\nn\ee
The first line is a result of multiplication of the left identity (\ref{auxiliary}) by $x^2$ on the left and  using
the second and third identities  (\ref{auxiliary}). The second line is produced in a similar way, multiplying the left
equality by $x^2$ on the right.

Multiply the second line by  $a^2$ and add to the first line. The resulting equation will take the form
$$0=B[[x,y]_a,[[x,y]_a,z]_b]_{b^{-1}}, $$
as required.
\end{proof}
Remark that the hypothesis of the lemma is symmetric with respect to  replacement
of $a$ by $a^{-1}$, as well as $b$ by $b^{-1}$. Therefore, this replacement
can be made arbitrarily in the statement.

It follows that if pairs $x,y$ and $y,z$ commute as adjacent root vectors and $x,z$ as
distant root vectors, then the  $[x,y]_a$ commutes with $z$ as with $y$, as thought $z$ does not notice
the presence of $x$ in $[x,y]_a$.

\begin{lemma}
\label{lemma_xyz}
Suppose $x,y,z$ satisfy the relations
$$
[y,[y,x]_q]_{q^{-1}}=0,\quad [y,[y,z]_q]_{q^{-1}}=0,\quad
$$
and $x$ commutes with $z$.
Then
\be
[y,[x,[y,z]_q]_{q}]=0.
\label{xyyz}
\ee
\end{lemma}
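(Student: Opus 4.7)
Set $u = [y,x]_q$ and $w = [y,z]_q$. The two Serre-type hypotheses then read $[y,u]_{q^{-1}}=0$ and $[y,w]_{q^{-1}}=0$, or equivalently $uy = qyu$ and $wy = qyw$. Applying the quasi-Jacobi identity (\ref{Jacobi}) to $[y,[x,w]_q]$ with $a=q$, $b=1$, $c=q$ gives
\[
[y,[x,w]_q] \;=\; [[y,x]_q,w] + q[x,[y,w]_{q^{-1}}] \;=\; [u,w] + q[x,[y,w]_{q^{-1}}],
\]
and the second term vanishes by the hypothesis on $w$. Thus (\ref{xyyz}) reduces to showing $[u,w]=0$.

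Since $uy=qyu$ and $wy=qyw$, the $y$-factors hidden inside $u$ and $w$ can be pushed through. Explicitly, $uw = u(yz-qzy) = q\,yuz - q\,(uz)y = q[y,uz]$ and likewise $wu = q[y,wx]$, so $[u,w]=q[y,\,uz-wx]$. Using only $xz=zx$, direct expansion gives $uz = yzx - qxyz$ and $wx = yzx - qzyx$, so $uz-wx = q(zyx - xyz)$ and therefore
\[
[u,w] \;=\; q^{2}\bigl[y,\; zyx - xyz\bigr].
\]

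It remains to check $[y,zyx-xyz] = (yzyx - zyxy) - (yxyz - xyzy) = 0$. Both Serre hypotheses can be rewritten as $yxy = (y^{2}x + xy^{2})/(q+q^{-1})$ and $yzy = (y^{2}z + zy^{2})/(q+q^{-1})$. Substituting these for $yxy$ and $yzy$ wherever they appear, and using $zx=xz$ throughout, a short calculation shows
\[
yzyx - zyxy \;=\; \frac{y^{2}xz - xz\, y^{2}}{q+q^{-1}} \;=\; yxyz - xyzy,
\]
so their difference is zero and the lemma follows. The main obstacle is really this last calculation: one might hope to invoke Lemma~\ref{Great Auxiliary} directly, but that result requires a Serre relation of the form $[x,[x,y]_a]_{a^{-1}}=0$, which is not among our hypotheses (we only have the ``reversed'' relation $[y,[y,x]_q]_{q^{-1}}=0$), so a short direct expansion as above seems unavoidable.
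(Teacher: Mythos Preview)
Your proof is correct, and the opening reduction via the quasi-Jacobi identity (\ref{Jacobi}) with $a=c=q$, $b=1$ is exactly what the paper does: both arguments reduce the claim to showing $[u,w]=[[y,x]_q,[y,z]_q]=0$.

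Where you diverge is in justifying $[u,w]=0$. The paper simply writes that this term ``vanishes as proved,'' evidently pointing back to Lemma~\ref{Great Auxiliary}. Your closing observation is well taken: Lemma~\ref{Great Auxiliary} as stated requires a relation of the shape $[x,[x,y]_a]_{a^{-1}}=0$, and its conclusion is a Serre-type identity $[W,[W,z]_b]_{b^{-1}}=0$ with the \emph{same} element $W$ in both slots, not a plain commutator between two distinct elements $u$ and $w$. So the paper's reference is at best elliptical, and your direct computation---rewriting $[u,w]=q^2[y,zyx-xyz]$ and then using the symmetric form $yxy=(y^2x+xy^2)/(q+q^{-1})$ of the Serre relations together with $xz=zx$---is a genuine completion of the argument rather than a detour. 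The calculation checks out line by line; in particular the identity $yzyx-zyxy=(y^2xz-xzy^2)/(q+q^{-1})=yxyz-xyzy$ is exactly right.

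In short: same strategy as the paper, but you have supplied the missing verification that the paper's ``as proved'' leaves to the reader.
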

\begin{proof}
Using the "Jacobi identity" \ref{Jacobi} with $a=c=q$, $b=1$ we get
$$
[y,[x,[y,z]_{q}]_{q}]=[[y,x]_q,[y,z]_{q}]+[x,[y,[y,z]_{q}]_{q^{-1}}.
$$
The right-hand side is zero: the first term vanishes as proved, the second due to the assumption.
This proves  (\ref{xyyz}).
\end{proof}
Remark that $q$ can be replaced by $q^{-1}$ in the two assumption equalities arbitrarily, as the double commutator
$[y,[y,x]_q]_{q^{-1}}$ is stable under this transformation.

\begin{proof}[Proof of Proposition \ref{prop_rel2}]
Let $\gm=\al+\mu$, where $\al$ is a simple positive root.
\be
[e_{\gm},f_{\gm}]&=&[[e_{\al},e_{\mu}]_q,[f_{\mu},f_{\al}]_{\bar q}]
=[[[e_{\al},e_{\mu}]_q,f_{\mu}],f_{\al}]_{\bar q}+[f_{\mu},[[e_{\al},e_{\mu}]_q,f_{\al}]]_{\bar q}
\nn\\[1pt]
&=&
[[e_{\al},[e_{\mu},f_{\mu}]]_q,f_{\al}]_{\bar q}+[f_{\mu},[[e_{\al},f_{\al}],e_{\mu}]_q]_{\bar q}
\nn\\[1pt]
&=&
[[e_{\al},\frac{q^{h_\mu}-q^{-h_\mu}}{q-q^{-1}}]_q,f_{\al}]_{\bar q}+[f_{\mu},[\frac{q^{h_\al}-q^{-h_\al}}{q-q^{-1}},e_{\mu}]_q]_{\bar q}
\nn\\[1pt]
&=&
\frac{-q^{-1}(1-q^2)}{q-q^{-1}}[e_{\al},f_{\al}]q^{-h_\mu}-\frac{q-q^{-1}}{q-q^{-1}}[f_{\mu},e_{\mu}]q^{h_\al}
\nn\\[1pt]
&=&
\frac{q^{h_\al}-q^{-h_\al}}{q-q^{-1}}q^{-h_\mu}+\frac{q^{h_\mu}-q^{-h_\mu}}{q-q^{-1}}q^{h_\al}=
\frac{q^{h_\al+h_\mu}-q^{-h_\al-h_\mu}}{q-q^{-1}},
\nn
\ee
\be
[e_{\mu},f_{\mu+\gm}]&=&[e_{\mu},[f_{\gm},f_{\mu}]_{\bar q}]=[f_{\gm},\frac{q^{h_\mu}-q^{-h_\mu}}{q-q^{-1}}]_{\bar q}=-\frac{1-q^{-2}}{q-q^{-1}}f_{\gm}q^{-h_\mu}=-q^{-1}f_{\gm}q^{-h_\mu},
\nn
\ee
\be
[e_{\gm},f_{\mu+\gm}]&=&[e_{\gm},[f_{\gm},f_{\mu}]_{\bar q}]=[\frac{q^{h_\gm}-q^{-h_\gm}}{q-q^{-1}},f_{\mu}]_{\bar q}=
\frac{q-q^{-1}}{q-q^{-1}}f_{\mu}q^{h_\gm}=f_{\mu}q^{h_\gm},
\nn
\ee
\be
[e_{\gm},f_{\mu+\gm+\nu}]&=&[e_{\gm},[f_{\gm +\nu},f_{\mu}]_{\bar q}]=-q^{-1}[f_{\nu}q^{-h_\gm},f_{\mu}]_{\bar q}
=
-q^{-1}(f_{\nu}q^{-h_\gm}f_{\mu} -\bar q f_{\mu}f_{\nu}q^{-h_\gm})=
\nn\\[1pt]
&=&-q^{-1}(\bar q f_{\nu}f_{\mu}q^{-h_\gm} -\bar q f_{\mu}f_{\nu}q^{-h_\gm})
=0,
\nn
\ee
\be
[e_{\mu+\gm},f_{\gm+\nu}]&=&[[e_\mu,e_{\gm}]_{q},[f_{\nu},f_{\gm}]_{\bar q}]=
[f_{\nu},[[e_\mu,e_{\gm}]_{q},f_{\gm}]]_{\bar q}
=[f_{\nu},[e_\mu,[e_{\gm},f_{\gm}]]_{q}]_{\bar q}
\nn\\[1pt]
&=&
[f_{\nu},[e_\mu,\frac{q^{h_\gm}-q^{-h_\gm}}{q-q^{-1}}]_{q}]_{\bar q}
=-\frac{1-q^{2}}{q-q^{-1}}(1-\bar q^{2}) f_{\nu}e_\mu q^{-h_\gm}
\nn\\[1pt]
&=&
(q-q^{-1}) f_{\nu}e_\mu q^{-h_\gm},
\nn
\ee
\end{proof}
\begin{proof}[Proof of Corollary \ref{prop_rel3}]
\be
[e_{\mu},f^k_{\mu+\gm}]&=&-q^{-1}f_{\mu+\gm}^{k-1}f_{\gm}q^{-h_\mu}-q^{-1}f_{\mu+\gm}^{k-2}f_{\gm}q^{-h_\mu}f_{\mu+\gm}+\ldots
\nn\\&=&
-q^{-1}f_{\mu+\gm}^{k-1}f_{\gm}q^{-h_\mu}-q^{-1+1}f_{\mu+\gm}^{k-1}f_{\gm}q^{-h_\mu}+\ldots=-q^{-1}\frac{q^{2k}-1}{q^{2}-1}f_{\mu+\gm}^{k-1}f_{\gm}q^{-h_\mu},
\nn
\\[1pt]
[e_{\gm},f_{\mu+\gm}^k]&=&f_{\mu}q^{h_\gm}f_{\mu+\gm}^{k-1}+f_{\mu+\gm}f_{\mu}q^{h_\gm}f_{\mu+\gm}^{k-2}+\ldots
\nn\\&=&
q^{-k+1}f_{\mu}f_{\mu+\gm}^{k-1}q^{h_\gm}+q^{-k+3}f_{\mu}f_{\mu+\gm}^{k-1}q^{h_\gm}+\ldots
=
q^{-k+1}\frac{q^{2k}-1}{q^{2}-1}f_{\mu}f_{\mu+\gm}^{k-1}q^{h_\gm},\nn
\\[1pt]
[e_{\gm},f^k_{\gm}]&=&f^{k-1}_{\gm}\frac{q^{h_{\gm}}-q^{-h_{\gm}}}{q-q^{-1}}+f^{k-2}_{\gm}\frac{q^{h_{\gm}}-q^{-h_{\gm}}}{q-q^{-1}}f_{\gm}+\ldots
\nn\\&=&
f^{k-1}_{\gm}\Bigl(\frac{q^{h_\gm}-q^{-h_\gm}}{q-q^{-1}}+\frac{q^{h_\gm-2}-q^{-h_\gm+2}}{q-q^{-1}}+\ldots
\nn\\&=&
f^{k-1}_{\gm}\Bigl( q^{h_\gm+1}\frac{1-q^{-2k}}{(q-q^{-1})^2}+q^{-h_\gm-1}\frac{1-q^{2k}}{(q-q^{-1})^2}\Bigr)
.\nn
\ee
\end{proof}
\vspace{20pt}
{\bf Acknowledgements}.
This research is supported in part by the RFBR grant 09-01-00504.
The author is grateful to Max-Plank institute for hospitality.
The article has been improved owing to valuable comments
of the referee, to whom the author is much indebted for his/her effort.

\section*{Erratum to the journal version}
\begin{enumerate}
\item
Definition of $S^\la$ before Proposition 3.3 should be as

{\em Denote by $S^\la=\la(S_-^{(1)})S_-^{(2)}\tp S_+^{(1)}\la^*(S_+^{(2)})$,
where $S_-\tp S_+$ is  a lift of $S_{\C^\la, \C^{\la^*}}\in M_{\la}^+\tp M_{\la^*}^-$ to $\U_\hbar(\p^-)\tp \U_\hbar(\p^+)$.}

It turns to the journal version if the lift is appropriate.
\item
Page 9, the ordering on roots (after definition of $e_\mu$ and $\tilde e_\mu$):

{\em The roots can be written in an orthogonal basis $\{\ve_i\}_{i=1}^n$ of weights of the 
natural representation as $\ve_i-\ve_j$, $i,j=1,\ldots,n$, $i\not=j$.
The lexicographical ordering on pairs $(i,j)$ induce
an ordering  on positive roots $\ve_i-\ve_j$, $i<j$,
consistent with the ordered basis $(\al_1,\al_2,\ldots, \al_{n-1})\subset \h^*$.}

\item
Formula (5.14): the scalar factor should be   $q^{(\eta_i,\la)}$ rather than $q^{-(\eta_i,\la)}$.
\item In Theorem 6.3 and Corollary 6.4: $x$ and $y$ should be interchanged. 
Another way to fix this error is to understand by $\cdot_\hbar$ the opposite multiplication 
in the RTT dual in Corollary 6.3; then $\U_\hbar$ should be taken with the opposite comultiplication.

The star product of Corollary 6.5 is correct
 because the classical multiplication in $\C[G]$ is commutative.
\end{enumerate}
\end{document}